\newcommand{\lm}[2]{\bar{#1}_{#2}}
\newcommand{\dop}{\mathrm{d}}
\renewcommand{\Re}{\mathbf{R}}
\renewcommand{\epsilon}{\varepsilon}
\newcommand{\vech}{\mathrm{vech}}
\newcommand{\E}[1]{\mathbf{E}\left[{#1}\right]}
\newcommand{\parens}[1]{\left({#1}\right)}
\newcommand{\crotchet}[1]{\left[{#1}\right]}
\newcommand{\tuborg}[1]{\left\{{#1}\right\}}
\newcommand{\norm}[1]{\left\|{#1}\right\|}
\newcommand{\abs}[1]{\left|{#1}\right|}
\newcommand{\cp}{\overset{P}{\to}}
\newcommand{\cl}{\overset{\mathcal{L}}{\to}}
\newcommand{\tr}{\mathrm{tr}}
\newcommand{\ip}[2]{#1\left\llbracket #2\right\rrbracket}
\newtheorem{theorem}{Theorem}
\theoremstyle{remark}
\newtheorem*{remark}{Remark}
\title{Adaptive test for ergodic diffusions plus noise}
\author[1]{Shogo H. Nakakita}
\author[1,2]{Masayuki Uchida}
\affil[1]{Graduate School of Engineering Science, Osaka University}
\affil[2]{Center for Mathematical Modeling and Data Science, Osaka University}
\date{\today}
\begin{document}
	
\maketitle

\begin{abstract}
We propose some parametric tests for ergodic diffusion-plus-noise model, which is a version of state-space modelling in statistics for stochastic diffusion equations. The test statistics are classified into three types: 
likelihood-ratio-type test statistic; Wald-type one; and Rao-type one. 
All the test statistics are constructed with quasi-likelihood-functions for local mean sequence of noised observation.
We also simulate the behaviour of them for several practical hypothesis tests and check the convergence in law of test statistics under null hypotheses and consistency of the test under alternative ones.
We apply the method for real data analysis of wind data, and examine some sets of the hypotheses mainly with respect to the structure of diffusion coefficient.
\end{abstract}

\section{Introduction}

Our research deals with the $d$-dimensional diffusion process being a solution for the following stochastic differential equation:
\begin{align*}
\dop X_t = b(X_t, \beta)\dop t + a(X_t, \alpha)\dop w_t,\ X_0=x_0,
\end{align*}
where $\tuborg{w_t}_{t\ge 0}$ is the $r$-dimensional Wiener process, $x_0$ is a $\Re^d$-valued random variable, $\alpha\in\Theta_1\subset\Re^{m_1}$, $\beta\in\Theta_2\subset\Re^{m_2}$, $\theta:=\parens{\alpha,\beta}$, $\Theta:=\Theta_1\times\Theta_2$ being the compact and convex parameter space, $a:\Re^d\times \Theta_1 \to \Re^d\otimes\Re^r$ and $b:\Re^d\times \Theta_2\to \Re^d$ are known functions. We assume that the true value of parameter $\theta^\star$ belongs to $\mathrm{Int}\parens{\Theta}$. 

We set the observational scheme same as \citep{NU17} such that for $i=0,\ldots,n$,
\begin{align*}
Y_{ih_n}=X_{ih_n}+\Lambda^{1/2}\epsilon_{ih_n},
\end{align*}
where $h_n$ is the discretised step satisfying $h_n\to 0$ and $T_n:=nh_n\to \infty$, $\Lambda\in\Re^d\otimes\Re^d$ is a positive semi-definite matrix, and $\tuborg{\epsilon_{ih_n}}_i$ is an i.i.d. sequence of random variables such that $\E{\epsilon_{ih_n}}=\mathbf{0}$, $\mathrm{Var}\parens{\epsilon_{ih_n}}=I_d$, and each component is independent of other components as well as $\tuborg{w_t}$ and $x_0$. Let us define $\Theta_{\epsilon}$ the compact and convex parameter space of $\vech \Lambda$, $\vartheta:=\parens{\theta,\theta_{\epsilon}}$, and $\Xi:=\Theta\times \Theta_{\epsilon}$.

The statistical framework for the analysis of time series data has been mainly based on discrete-time stochastic processes such as ARMA model (see \citep{BD91}). Those discrete-time models confront with some difficulties to express complex phenomena such that innovation term whose variance is dependent on state $X_{t}$ itself. One of the solutions for those difficulties is the modelling with stochastic differential equations, which flexibly describe the probabilistic perturbation dependent on $X_t$. The parametric inference for diffusion processes modelled with stochastic differential equations has been researched enthusiastically (e.g., see \citep{Fl89}, \citep{Y92}, \citep{BS95}, \citep{K95}, \citep{K97}, \citep{Y11}, \citep{UY12}, and \citep{UY14}). As is well known, parametric estimation for one model is not sufficient in the context of real data analysis; we need methodology to compare multiple parametric models in terms of goodness of fit (e.g., for i.i.d. case, see \citep{Fe96} and \citep{LR05}). \citep{KU14} proposes likelihood-ratio-type test statistic for discretely observed ergodic diffusions to examine parametric hypotheses such as \citep{Fe96} and shows the convergence in law of test statistics and the consistency of the test. As another approach, \citep{U10} researches contrast-based information criterion for ergodic diffusion processes with discretised observation scheme (see also \citep{FU14} and \citep{EM18}). These instruments to examine goodness-of-fit are important to see whether we are motivated to use the flexible modelling with stochastic differential equations.

The classical time series analysis itself also has instruments of complex modelling such as state-space model (see \citep{BD91}). One simple version of state-space modelling decomposes the randomness of observation into endogenous perturbation of the system of interest and exogenous noise which contaminates only observation and does not influence the system itself. The importance of this decomposition has attracted attention not only in the research of time series analysis but also that of statistics for stochastic differential equations. For instance, the existence of observation noise in high-frequency financial data called microstructure noise is one of the major research topics in financial econometrics. \citep{GlJ01a}, \citep{GlJ01b} and \citep{JLMPV09} researched the diffusion with noise contaminating observation in the observation framework such that $nh_n$ is fixed. The statistics for diffusion-plus-noise with the setting $nh_n\to\infty$ has been also researched, e.g., by \citep{Fa14}, \citep{Fa16}, and \citep{NU17}. \citep{Fa14} proposes the consistent estimator for the variance of noise and parameter of the diffusion process, and \citep{Fa16} construct the estimator for the parameter of the diffusion with asymptotic normality when the variance of noise is known. \citep{NU17} provides the estimator for both the parameter of the diffusion process and the variance of noise with asymptotic normality when the variance of noise is unknown. However, as discussed above, it is necessary to construct the way to compare the goodness-of-fit of candidate models in practice, and this research tries to achieve it with likelihood-ratio-type test statistics with quasi-likelihood-functions proposed by \citep{NU17} in the manner of \citep{KU14} discussing the same problem under the assumption that exogenous noise in observation does not exist.

We also analyse some real data with our methods besides theoretical construction of test statistics. Our data of interest is MetData (\citep{NWTC}) which represents wind velocity with high frequency observation. \citep{NU17} examines existence of noise in some partial data (the plot is shown in Figure \ref{MetData}) in MetData and shows statistical significance of the existence, which indicates the motivation to use diffusion-plus-noise modelling rather than diffusion modelling without observation noise. We also use the same data and see if the diffusion coefficient $a\parens{x,\alpha}$ is dependent on $x$ or not and check if we are motivated to utilise the flexible modelling of stochastic diffusion equations.

\begin{figure}[h]
	\begin{subfigure}{.5\textwidth}
		\centering
		\includegraphics[bb=0 0 720 480,width=7cm]{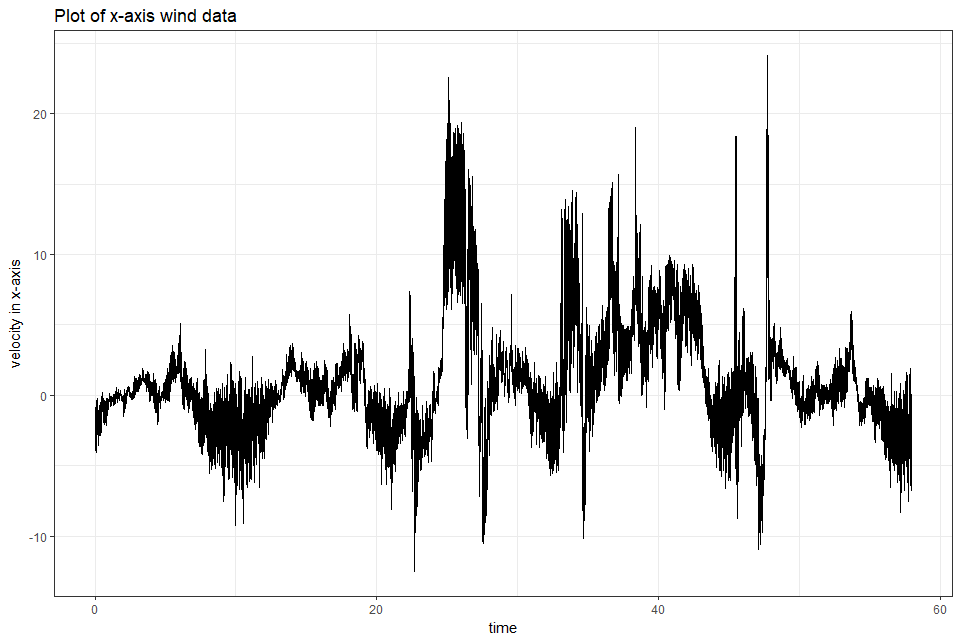}
	\end{subfigure}%
	\begin{subfigure}{.5\textwidth}
		\centering
		\includegraphics[bb=0 0 720 480,width=7cm]{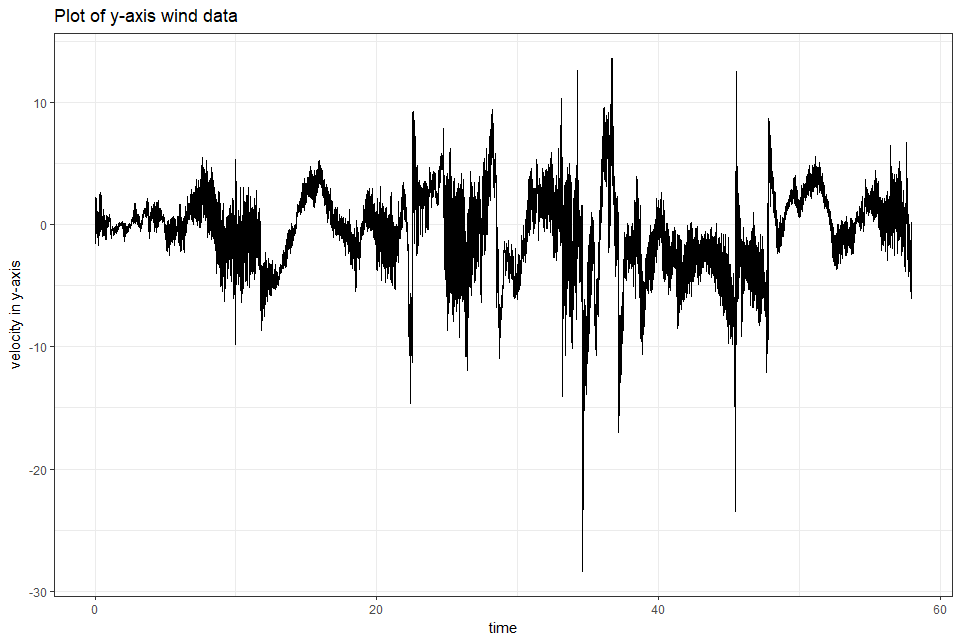}
	\end{subfigure}
	\caption{plot of wind velocity labelled Sonic x (left) and y (right) (119M) at the M5 tower from 00:00:00 on 1st July, 2017 to 20:00:00 on 5th July, 2017 with 0.05-second resolution \citep{NWTC}}\label{MetData}
\end{figure}

The paper composes of four parts: firstly, in Section 2, we show our assumption and notation before discussing concrete statements for parametric tests. In the second place, we state some theorems in Section 3 which show the asymptotic behaviour of adaptive likelihood-ratio-type test statistics, whose proofs are shown later. Section 4 examines the behaviour of statistics proposed in the previous section with computational simulation for 1-dimensional diffusion processes and 2-dimensional ones. Finally, in Section 5, MetData, a real data for wind with high-frequency observation, is used to see what our method concludes regarding the property of wind observed in data.
	
\section{Notation and assumption}

We set the following notations as \citep{NU17}.
\begin{enumerate}
	\item For a matrix $A$, $A^T$ denotes the transpose of $A$ and $A^{\otimes 2}:=AA^T$. For same size matrices $A$ and $B$, $\ip{A}{B}:=\tr\parens{AB^T}$. 
	\item For any vector $v$, $v^{(i)}$ denotes the $i$-th component of $v$. Similarly, $M^{(i,j)}$, $M^{(i,\cdot)}$ and $M^{(\cdot,j)}$ denote the $(i,j)$-th component, the $i$-th row vector and $j$-th column vector of a matrix $M$ respectively.
	\item $c(x,\alpha):=\parens{a(x,\alpha)}^{\otimes 2}.$
	\item $C$ is a positive generic constant independent of all other variables. If it depends on fixed other variables, e.g. an integer $k$, we will express as $C(k)$.
	\item $a(x):=a(x,\alpha^\star)$ and $b(x):=b(x,\beta^\star)$.
	\item Let us define $\vartheta:=\parens{\theta,\theta_\epsilon}\in \Xi$.
	\item A $\Re$-valued function $f$ on $\Re^d$ is a \textit{polynomial growth function} if there exists a constant $C>0$ such that for all $x\in \Re^d$,
	\begin{align*}
	\abs{f(x)}\le C\parens{1+\norm{x}}^C.
	\end{align*}
	$g:\Re^d\times \Theta\to \Re$ is a \textit{polynomial growth function uniformly in $\theta\in\Theta$} if there exists a constant $C>0$ such that for all $x\in \Re^d$,
	\begin{align*}
	\sup_{\theta\in\Theta}\abs{g(x,\theta)}\le C\parens{1+\norm{x}}^C.
	\end{align*}
	Similarly we say $h:\Re^d\times \Xi\to \Re$ is a \textit{polynomial growth function uniformly in $\vartheta\in\Xi$} if there exists a constant $C>0$ such that for all $x\in \Re^d$,
	\begin{align*}
	\sup_{\vartheta\in\Xi}\abs{h(x,\vartheta)}\le C\parens{1+\norm{x}}^C.
	\end{align*}
	\item For any $\Re$-valued sequence $u_n$, $R:\Theta\times\Re\times \Re^d\to \Re$ denotes a function with a constant $C$ such that
	\begin{align*}
	\abs{R(\theta,u_n,x)}\le Cu_n\parens{1+\norm{x}}^C
	\end{align*}
	for all $x\in\Re^d$ and $\theta\in\Theta$.
	\item Let us denote for any $\mu$-integrable function $f$ on $\Re^d$, $
	\mu(f(\cdot)) := \int f(x)\mu(\dop x).$
	\item 	We set \begin{align*}
	\mathbb{Y}_1(\alpha)&:=-\frac{1}{2}\nu_0\parens{\mathrm{tr}\parens{\parens{c(\cdot,\alpha)}^{-1}c(\cdot,\alpha^\star)-I_d}+\log\frac{\det c(\cdot,\alpha)}{\det c(\cdot,\alpha^\star)}},\\
	\mathbb{Y}_2(\beta)&:=
	-\frac{1}{2}\nu_0\parens{\ip{\parens{c(\cdot,\alpha^\star)}^{-1}}{\parens{b(\cdot,\beta)-b(\cdot,\beta^\star)}^{\otimes2}}}
	\end{align*}
	where $\nu_0$ is the invariant measure of $X$.
	\item $\cp$ and $\cl$ indicate convergence in probability and convergence in law respectively.
	\item For $f(x)$, $g(x,\theta)$ and $h(x,\vartheta)$, $f'(x):=\frac{\dop }{\dop x}f(x)$, $f''(x):=\frac{\dop^2}{\dop x^2}f(x)$, $\partial_{x}g(x,\theta):=\frac{\partial}{\partial x}g(x,\theta)$, $\partial_{\theta}g(x,\theta):=\frac{\partial}{\partial \theta}g(x,\theta)$, $\partial_{x}h(x,\vartheta):=\frac{\partial}{\partial x}h(x,\vartheta)$ and  $\partial_{\vartheta}h(x,\vartheta):=\frac{\partial}{\partial \vartheta}h(x,\vartheta)$.
	\item We define
	\begin{align*}
		J^{(2,2)}\parens{\alpha|\vartheta^{\star}}&:=
		\crotchet{\frac{1}{2}\nu_0\parens{\tr\tuborg{\parens{c}^{-1}\parens{\partial_{\alpha^{(i_1)}}c}\parens{c}^{-1}
					\parens{\partial_{\alpha^{(i_2)}}c}}(\cdot,\alpha)}
		}_{i_1,i_2},\\
		J^{(3,3)}(\beta|\vartheta^{\star})&:=
		\crotchet{\nu_0\parens{\ip{\parens{c}^{-1}}{
					\parens{\partial_{\beta^{(j_1)}}b}\parens{\partial_{\beta^{(j_2)}}b}^T}(\cdot,\alpha^\star,\beta)}}_{j_1,j_2},
	\end{align*}
	$J^{(2,2)}\parens{\vartheta^{\star}}:=J^{(2,2)}\parens{\alpha^{\star}|\vartheta^{\star}}$, 
	and $J^{(3,3)}(\vartheta^{\star}):=J^{(3,3)}(\beta^{\star}|\vartheta^{\star})$.
	\item $\lambda_{\min}(A)$ and $\lambda_{\max}(A)$ denote the minimum eigenvalue of a symmetric matrix $A$ and the maximum one respectively.
\end{enumerate}

We make the following assumptions.
\begin{enumerate}
	\item[(A1)] $b$ and $a$ are continuously differentiable for 4 times,  and the components of themselves as well as their derivatives are polynomial growth functions uniformly in $\theta\in\Theta$. Furthermore, there exists $C>0$ such that for all $x\in\Re^d$,
	\begin{align*}
	&\norm{b(x)}+\norm{b'(x)}+\norm{b''(x)}\le C(1+\norm{x}), \\ 
	&\norm{a(x)}+\norm{a'(x)}+\norm{a''(x)}\le C(1+\norm{x}). 
	\end{align*}
	\item[(A2)] $X$ is ergodic and the invariant measure $\nu_0$ has $k$-th moment for all $k>0$.
	\item[(A3)] For all $k>0$, $\sup_{t\ge0}\E{\norm{X_t}^k}<\infty$.
	\item[(A4)] For any $k>0$, $\epsilon_{ih_n}$ has $k$-th moment and the component of $\epsilon_{ih_n}$ are independent of the other components for all $i$, $\tuborg{w_t}$ and $x_0$. In addition, the marginal distribution of each component is symmetric.
	\item[(A5)] $\inf_{x,\alpha} \det c(x,\alpha)>0$.
	\item[(A6)] There exist positive constants $\chi$ and $\tilde{\chi}$ such that $\mathbb{Y}_1(\alpha)\le -\chi\norm{\alpha-\alpha^\star}^2$ and 
	$\mathbb{Y}_2(\beta)\le -\tilde{\chi}\norm{\beta-\beta^\star}^2$.
	\item[(A7)] The components of $b$, $a$, $\partial_xb$, $\partial_{\beta}b$, $\partial_xa$, $\partial_{\alpha}a$, $\partial_x^2b$, $\partial_{\beta}^2b$, $\partial_x\partial_{\beta}b$, $\partial_x^2a$, 
	$\partial_{\alpha}^2a$ and $\partial_x\partial_{\alpha}a$ are polynomial growth functions uniformly in $\theta\in\Theta$.
	\item[(AT)] $h_n=p_n^{-\tau},\ \tau\in(1,2)$ and $h_n\to0$, $p_n\to\infty$, $k_n\to\infty$, $\Delta_n=p_nh_n\to0$, $nh_n\to\infty$ as $n\to\infty$.
	\item[(R1)] It holds
	\begin{align*}
	\inf_{\alpha\in\Theta_1}\lambda_{\min}\parens{J^{(2,2)}\parens{\alpha|\vartheta^{\star}}}&>0,\\
	\inf_{\beta\in\Theta_1}\lambda_{\min}\parens{J^{(3,3)}\parens{\beta|\vartheta^{\star}}}&>0.
	\end{align*}
\end{enumerate}

\begin{remark}
	The assumption (AT) is a restriction of the assumption (AH) discussed in \citep{NU17} in terms of the space of the tuning parameter $\tau$; 
	(AH) sets it to be $(1,2]$.
	This assumption (AT) is necessary to match the asymptotic variance of the first derivatives of quasi-likelihood functions regarding parameters with the matrices
	to which the second derivatives of quasi-likelihoods with respect to parameter converge in probability.
\end{remark}

\section{Theorems and composition of parametric tests}

We consider the following statistical hypothesis testing problem, for $r\in\tuborg{1,\ldots,m_1+m_2}$,
\begin{align*}
H_0&\colon \theta^{(\lambda_1)}=\cdots=\theta^{(\lambda_{r_1})}=0,\\
H_1&\colon \text{not }H_0,
\end{align*}
where $\lambda_i\in\tuborg{1,\ldots,m_1+m_2}$ for all $i\in\tuborg{1,\ldots,r}$, and $\lambda_i<\lambda_j$ if $i\neq j$. 
Let us denote $r_1$ to be the number of elements $\lambda_i$ in $\tuborg{1,\ldots,m_1}$ and $r_2$ to be that of elements $\lambda_i$ in $\tuborg{m_1+1,\ldots,m_2}$. For simplicity, we also assume if $r_1>0$, then $\lambda_1=1,\ldots,\lambda_{r_1}=r_1$ and if $r_2>0$, then $\lambda_{m_1+1}=1,\ldots,\lambda_{m_1+r_2}=r_2$. That is, if $r_1>0$ and $H_0$ hold, then
\begin{align*}
	\alpha^{(1)}=\cdots=\alpha^{(r_1)}=0,
\end{align*}
and if $r_2>0$ and $H_0$ hold, then
\begin{align*}
	\beta^{(1)}=\cdots=\beta^{(r_2)}=0.
\end{align*}
We let $\Theta_{0,1}$ and $\Theta_{0,2}$ denote the parameter space of diffusion parameter and drift one under $H_0$. 
To compose the test statistic, let us define the following quasi-likelihood functions,
\begin{align*}
	&\mathbb{L}_{1,n}(\alpha|\Lambda):=-\frac{1}{2}\sum_{j=1}^{k_n-2}
	\parens{\ip{\parens{\frac{2}{3}\Delta_n c_n^{\tau}(\lm{Y}{j-1},\alpha,\Lambda)}^{-1}}{\parens{\lm{Y}{j+1}-\lm{Y}{j}}^{\otimes 2}}
		+\log\det \parens{ c_n^{\tau}(\lm{Y}{j-1},\alpha,\Lambda)}}, \\
	&\mathbb{L}_{2,n}(\beta|\Lambda,\alpha)
	:=-\frac{1}{2}\sum_{j=1}^{k_n-2}
	\ip{\parens{\Delta_nc_n^\tau(\lm{Y}{j-1},\alpha,\Lambda)}^{-1}}{\parens{\lm{Y}{j+1}-\lm{Y}{j}-\Delta_nb(\lm{Y}{j-1},\beta)}^{\otimes 2}},
\end{align*}
where $c_{n}^{\tau}\parens{x,\alpha,\Lambda}=c\parens{x,\alpha}+3\Delta_n^{\frac{2-\tau}{\tau-1}}\Lambda$.
Moreover, $\hat{\Lambda}_n$, $\hat{\alpha}_n$, $\tilde{\alpha}_n$, $\hat{\beta}_n$, and $\tilde{\beta}_n$ denote the estimators satisfying $\hat{\Lambda}_n=\frac{1}{2n}\sum_{i=0}^{n-1}\parens{Y_{(i+1)h_n}-Y_{ih_n}}^{\otimes2}$,
\begin{align*}
\mathbb{L}_{1,n}\parens{\hat{\alpha}_n|\hat{\Lambda}_n}&=\sup_{\alpha\in\Theta_{1}}\mathbb{L}_{1,n}\parens{\alpha|\hat{\Lambda}_n},\\
\mathbb{L}_{1,n}\parens{\tilde{\alpha}_n|\hat{\Lambda}_n}&=\sup_{\alpha\in\Theta_{0,1}}\mathbb{L}_{1,n}\parens{\alpha|\hat{\Lambda}_n},\\
\mathbb{L}_{2,n}\parens{\hat{\beta}_n|\hat{\Lambda}_n,\hat{\alpha}_n}&=\sup_{\beta\in\Theta_{2}}	\mathbb{L}_{2,n}\parens{\beta|\hat{\Lambda}_n,\hat{\alpha}_n},\\
\mathbb{L}_{2,n}\parens{\tilde{\beta}_n|\hat{\Lambda}_n,\hat{\alpha}_n}&=\sup_{\beta\in\Theta_{0,2}}	\mathbb{L}_{2,n}\parens{\beta|\hat{\Lambda}_n,\hat{\alpha}_n}.
\end{align*}

\begin{remark}
	$\lambda_i$ can take values only in $\tuborg{1,\ldots,m_1}$, or $\tuborg{m_1+1,\ldots,m_1+m_2}$. It indicates that we are able to test the diffusion parameter without drift one and vice versa.
\end{remark}

\subsection{Likelihood-ratio-type test}
We set the following likelihood-ratio-type test statistics:
\begin{align*}
\mathcal{T}_{1,n} &:= \frac{16}{9}\parens{\mathbb{L}_{1,n}\parens{\hat{\alpha}_n|\hat{\Lambda}_n}
	-\mathbb{L}_{1,n}\parens{\tilde{\alpha}_n|\hat{\Lambda}_n}} \\
&=-\frac{16}{9}\parens{\log \frac{\sup_{\alpha\in\Theta_{0,1}}\exp\parens{\mathbb{L}_{1,n}}(\alpha|\hat{\Lambda}_n)}
	{\sup_{\alpha\in\Theta_1}\exp\parens{\mathbb{L}_{1,n}}(\alpha|\hat{\Lambda}_n)}},\\
\mathcal{T}_{2,n} &:= 2\parens{\mathbb{L}_{2,n}\parens{\hat{\beta}_n|\hat{\Lambda}_n,\hat{\alpha}_n}
	-\mathbb{L}_{2,n}\parens{\tilde{\beta}_n|\hat{\Lambda}_n,\hat{\alpha}_n}} \\
&=-2\parens{\log \frac{\sup_{\beta\in\Theta_{0,2}}\exp\parens{\mathbb{L}_{2,n}}(\beta|\hat{\Lambda}_n,\hat{\alpha}_n)}
	{\sup_{\beta\in\Theta_2}\exp\parens{\mathbb{L}_{2,n}}(\beta|\hat{\Lambda}_n,\hat{\alpha}_n)}}.
\end{align*}
Note that if $r_i=0$, then $\mathcal{T}_{i,n}=0$ automatically for both of $i=1,2$.
\begin{theorem}\label{l0}
	Assume (A1)-(A7), (AT), $H_0$ and $k_n\Delta_n^2\to 0$ hold. Then we have
	\begin{align*}
	\mathcal{T}_{1,n} + \mathcal{T}_{2,n} &\cl \chi_{r}^2.
	\end{align*}
\end{theorem}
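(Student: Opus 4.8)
The plan is to reduce the likelihood-ratio statistics to quadratic forms in the normalized score vectors and then invoke the joint asymptotic normality of those scores, which is the substance of the companion estimation theory in \citep{NU17}. First I would treat $\mathcal{T}_{1,n}$ and $\mathcal{T}_{2,n}$ separately and in parallel, since the adaptive structure (plugging in $\hat\Lambda_n$ into $\mathbb{L}_{1,n}$, then $\hat\Lambda_n,\hat\alpha_n$ into $\mathbb{L}_{2,n}$) makes the two problems decouple asymptotically: the estimation error of $\hat\Lambda_n$ enters the $\alpha$-score at a negligible rate under (AT) and $k_n\Delta_n^2\to 0$, and likewise the error of $\hat\alpha_n$ does not contaminate the leading term of the $\beta$-score. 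I would make this precise by a stochastic expansion of $\mathbb{L}_{1,n}(\cdot|\hat\Lambda_n)$ around $\hat\Lambda_n=\Lambda^\star$ and of $\mathbb{L}_{2,n}(\cdot|\hat\Lambda_n,\hat\alpha_n)$ around $(\Lambda^\star,\alpha^\star)$, showing the cross terms are $o_P(1)$ after the relevant $k_n$-normalization.

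Next, for each fixed $i\in\{1,2\}$, I would carry out the classical Wilks-type argument. Writing $\mathbb{G}_{i,n}$ for the suitably normalized gradient of $\mathbb{L}_{i,n}$ in the full parameter and $\mathbb{J}_i$ for the limit of the normalized negative Hessian, a second-order Taylor expansion around $\hat\alpha_n$ (resp. $\hat\beta_n$) gives $\mathbb{L}_{i,n}(\hat\cdot_n)-\mathbb{L}_{i,n}(\tilde\cdot_n)=\tfrac12 \|\mathbb{J}_i^{1/2}(\hat\cdot_n-\tilde\cdot_n)\|^2+o_P(1)$, and then the explicit form of the constrained maximizer $\tilde\cdot_n$ (only the free coordinates move, the tested ones are fixed at $0$) converts this into a quadratic form in $\mathbb{G}_{i,n}$ involving the appropriate block of $\mathbb{J}_i^{-1}$ — concretely, the Schur-complement structure yields $\mathcal{T}_{i,n}=\mathbb{G}_{i,n}^{(\mathrm{tested})\,T}\big(\text{corresponding block of }\mathbb{J}_i^{-1}\big)\mathbb{G}_{i,n}^{(\mathrm{tested})}+o_P(1)$. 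Here I must use that the matrices to which the second derivatives converge (the $J^{(2,2)},J^{(3,3)}$ of the notation list, up to the $16/9$ and $2$ normalizing constants built into $\mathcal{T}_{i,n}$) coincide with the asymptotic covariance of the scores $\mathbb{G}_{i,n}$ — this is exactly the information-type identity that the Remark after (AT) flags as the reason for restricting $\tau$ to $(1,2)$, so I would cite it rather than rederive it.

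To finish, I would establish the joint convergence $(\mathbb{G}_{1,n}^{(\mathrm{tested})},\mathbb{G}_{2,n}^{(\mathrm{tested})})\cl N(0,\mathrm{diag}(\mathbb{J}_{1,00},\mathbb{J}_{2,00}))$, where $\mathbb{J}_{i,00}$ is the tested-coordinates block and the block-diagonality reflects the $\alpha$--$\beta$ orthogonality of the quasi-likelihood. Given this, $\mathcal{T}_{1,n}$ and $\mathcal{T}_{2,n}$ converge jointly to independent $\chi^2_{r_1}$ and $\chi^2_{r_2}$ variables (each via the standard fact that $Z^TB^{-1}Z\sim\chi^2$ when $Z\sim N(0,B)$ restricted to the right block, using the Schur complement), whence $\mathcal{T}_{1,n}+\mathcal{T}_{2,n}\cl\chi^2_{r_1+r_2}=\chi^2_r$; the degenerate cases $r_i=0$ are handled by the convention noted before the theorem.

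The main obstacle I anticipate is controlling the plug-in errors rigorously: showing that replacing $\hat\Lambda_n$ by $\Lambda^\star$ in $\mathbb{L}_{1,n}$, and $(\hat\Lambda_n,\hat\alpha_n)$ by $(\Lambda^\star,\alpha^\star)$ in $\mathbb{L}_{2,n}$, perturbs the normalized score and Hessian only by $o_P(1)$. This requires careful bookkeeping of the rates — the local-mean construction with block length $p_n$, the factor $3\Delta_n^{(2-\tau)/(\tau-1)}\Lambda$ inside $c_n^\tau$, and the condition $k_n\Delta_n^2\to0$ — together with uniform (in the parameter) moment bounds on the summands and their derivatives, drawn from (A1)--(A7) and the polynomial-growth/ergodicity machinery. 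The bias terms coming from the Euler-type approximation and from the noise covariance are where $k_n\Delta_n^2\to0$ is genuinely needed, so I would isolate those and bound them by an $R(\theta,\cdot,x)$-type remainder as defined in the notation.
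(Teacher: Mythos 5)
Your proposal is correct and follows essentially the same route as the paper: a Wilks-type Taylor expansion reducing each $\mathcal{T}_{i,n}$ to a quadratic form in the normalized score at the true parameter via the constrained-maximizer (projection/Schur-complement) structure, the information identity $I^{(2,2)}=\tfrac{9}{8}J^{(2,2)}$ absorbed by the $16/9$ normalization, and the joint block-diagonal CLT for the two scores from \citep{NU17} giving independent $\chi^2_{r_1}$ and $\chi^2_{r_2}$ limits. The only presentational difference is that you propose to rederive the plug-in negligibility of $\hat{\Lambda}_n$ and $\hat{\alpha}_n$ explicitly, whereas the paper simply cites the score CLT and Hessian convergence already established in \citep{NU17}.
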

The consistency of the likelihood-ratio-type test holds because of the following theorem.
\begin{theorem}\label{l1}
	Assume (A1)-(A7), (AT) and $H_1$ hold. Then for all $M>0$,
	\begin{align*}
		P\parens{\mathcal{T}_{1,n} + \mathcal{T}_{2,n}\le M}\to 0.
	\end{align*}
\end{theorem}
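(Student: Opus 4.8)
The plan is to use the non-negativity of $\mathcal{T}_{1,n}$ and $\mathcal{T}_{2,n}$ — which holds because $\hat{\alpha}_n,\hat{\beta}_n$ maximise $\mathbb{L}_{1,n}\parens{\cdot|\hat{\Lambda}_n}$ and $\mathbb{L}_{2,n}\parens{\cdot|\hat{\Lambda}_n,\hat{\alpha}_n}$ over the larger sets $\Theta_1\supset\Theta_{0,1}$ and $\Theta_2\supset\Theta_{0,2}$ — to reduce the claim to showing that, under $H_1$, at least one of the two diverges to $+\infty$ in probability; then $\mathcal{T}_{1,n}+\mathcal{T}_{2,n}\ge\max\tuborg{\mathcal{T}_{1,n},\mathcal{T}_{2,n}}\cp\infty$. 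Under $H_1$ the true value violates at least one block of constraints, i.e. $\alpha^\star\notin\Theta_{0,1}$ or $\beta^\star\notin\Theta_{0,2}$ (if $r_1=0$ then $\mathcal{T}_{1,n}\equiv0$ and necessarily $\beta^\star\notin\Theta_{0,2}$, and symmetrically when $r_2=0$), and I would handle the two cases in parallel. The workhorse, available from the consistency theory of \citep{NU17}, is a uniform law of large numbers for the normalised quasi-likelihood increments: using ergodicity (A2), the moment bounds (A2)--(A4), $\hat{\Lambda}_n\cp\Lambda$, and $3\Delta_n^{(2-\tau)/(\tau-1)}\hat{\Lambda}_n\cp0$ (so $c_n^{\tau}\parens{\cdot,\alpha,\hat{\Lambda}_n}$ collapses to $c(\cdot,\alpha)$), one has
\begin{align*}
\sup_{\alpha\in\Theta_1}\abs{\frac{1}{k_n}\parens{\mathbb{L}_{1,n}\parens{\alpha|\hat{\Lambda}_n}-\mathbb{L}_{1,n}\parens{\alpha^\star|\hat{\Lambda}_n}}-\mathbb{Y}_1(\alpha)}\cp0,
\end{align*}
and, since also $\hat{\alpha}_n\cp\alpha^\star$ under $H_1$,
\begin{align*}
\sup_{\beta\in\Theta_2}\abs{\frac{1}{T_n}\parens{\mathbb{L}_{2,n}\parens{\beta|\hat{\Lambda}_n,\hat{\alpha}_n}-\mathbb{L}_{2,n}\parens{\beta^\star|\hat{\Lambda}_n,\hat{\alpha}_n}}-\mathbb{Y}_2(\beta)}\cp0.
\end{align*}

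In the case $\alpha^\star\notin\Theta_{0,1}$ I would argue as follows. Since $\alpha^\star\in\Theta_1$ and $\tilde{\alpha}_n$ maximises $\mathbb{L}_{1,n}\parens{\cdot|\hat{\Lambda}_n}$ over $\Theta_{0,1}$,
\begin{align*}
\frac{9}{16}\mathcal{T}_{1,n}&=\mathbb{L}_{1,n}\parens{\hat{\alpha}_n|\hat{\Lambda}_n}-\mathbb{L}_{1,n}\parens{\tilde{\alpha}_n|\hat{\Lambda}_n}\\
&\ge\mathbb{L}_{1,n}\parens{\alpha^\star|\hat{\Lambda}_n}-\sup_{\alpha\in\Theta_{0,1}}\mathbb{L}_{1,n}\parens{\alpha|\hat{\Lambda}_n}\\
&=-\sup_{\alpha\in\Theta_{0,1}}\parens{\mathbb{L}_{1,n}\parens{\alpha|\hat{\Lambda}_n}-\mathbb{L}_{1,n}\parens{\alpha^\star|\hat{\Lambda}_n}}.
\end{align*}
Dividing by $k_n$ and using the first uniform convergence together with $\abs{\sup g_n-\sup g}\le\sup\abs{g_n-g}$ on the compact $\Theta_{0,1}$,
\begin{align*}
\frac{9}{16k_n}\mathcal{T}_{1,n}\ge-\frac{1}{k_n}\sup_{\alpha\in\Theta_{0,1}}\parens{\mathbb{L}_{1,n}\parens{\alpha|\hat{\Lambda}_n}-\mathbb{L}_{1,n}\parens{\alpha^\star|\hat{\Lambda}_n}}\cp-\sup_{\alpha\in\Theta_{0,1}}\mathbb{Y}_1(\alpha),
\end{align*}
and by (A6) the limit is $\ge\chi\inf_{\alpha\in\Theta_{0,1}}\norm{\alpha-\alpha^\star}^2>0$ because $\Theta_{0,1}$ is compact and excludes $\alpha^\star$; with $k_n\to\infty$ this yields $\mathcal{T}_{1,n}\cp\infty$, and since $\mathcal{T}_{2,n}\ge0$ also $\mathcal{T}_{1,n}+\mathcal{T}_{2,n}\cp\infty$.

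In the case $\beta^\star\notin\Theta_{0,2}$, the identical manipulation with $\mathbb{L}_{2,n}$, $\hat{\beta}_n$, $\tilde{\beta}_n$, the normalisation $T_n$ and the second uniform convergence gives
\begin{align*}
\frac{1}{2T_n}\mathcal{T}_{2,n}\ge-\frac{1}{T_n}\sup_{\beta\in\Theta_{0,2}}\parens{\mathbb{L}_{2,n}\parens{\beta|\hat{\Lambda}_n,\hat{\alpha}_n}-\mathbb{L}_{2,n}\parens{\beta^\star|\hat{\Lambda}_n,\hat{\alpha}_n}}\cp-\sup_{\beta\in\Theta_{0,2}}\mathbb{Y}_2(\beta),
\end{align*}
which by (A6) is $\ge\tilde{\chi}\inf_{\beta\in\Theta_{0,2}}\norm{\beta-\beta^\star}^2>0$ since $\beta^\star\notin\Theta_{0,2}$; as $T_n\to\infty$ this forces $\mathcal{T}_{2,n}\cp\infty$, hence again $\mathcal{T}_{1,n}+\mathcal{T}_{2,n}\cp\infty$. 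In either case $P\parens{\mathcal{T}_{1,n}+\mathcal{T}_{2,n}\le M}\to0$ for every $M>0$, which is the assertion.

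The hard part is the two uniform laws of large numbers invoked above: one must control the local-mean increments $\lm{Y}{j+1}-\lm{Y}{j}$, which mix a Brownian contribution of order $\Delta_n$ with an averaged-noise contribution of order $p_n^{-1}$, verify that after the scaling encoded in $c_n^{\tau}$ their conditional second moments match $\tfrac{2}{3}\Delta_n c_n^{\tau}$ and $\Delta_n c_n^{\tau}$ respectively, and show that the resulting Riemann-type sums over the $k_n$ local windows converge by ergodicity — uniformly in the parameter and stably under substitution of the consistent plug-ins $\hat{\Lambda}_n,\hat{\alpha}_n$ — with limits $\mathbb{Y}_1$ after normalisation by $k_n$ and $\mathbb{Y}_2$ after normalisation by $T_n=nh_n$. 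These are precisely the estimates established en route to the asymptotic theory of the adaptive quasi-maximum-likelihood estimators in \citep{NU17}, together with the consistency $\hat{\Lambda}_n\cp\Lambda$ and $\hat{\alpha}_n\cp\alpha^\star$, so here they may be cited rather than reproved; the rest is the elementary $\arg\max$/continuous-mapping bookkeeping above, closed off by the identifiability condition (A6).
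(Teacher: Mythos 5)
Your proposal is correct and follows essentially the same route as the paper: both reduce the claim to a positive-in-probability lower bound on the normalised statistic $\mathcal{T}_{1,n}/k_n$ (resp.\ $\mathcal{T}_{2,n}/T_n$) via the uniform law of large numbers for the quasi-likelihoods cited from \citep{NU17} combined with the identifiability condition (A6), then let $k_n\to\infty$ (resp.\ $T_n\to\infty$). The only cosmetic differences are that you centre the ULLN at $\alpha^\star$ while the paper centres it at $\hat{\alpha}_n$, and that you make explicit the non-negativity of each $\mathcal{T}_{i,n}$ used to combine the two cases; neither affects the substance.
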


\subsection{Other types of parametric test}

In addition, we also consider Rao-type test statistics:
\begin{align*}
\mathcal{R}_{1,n}&:=\parens{\frac{1}{\sqrt{k_n}}\partial_{\alpha}\mathbb{L}_{1,n}\parens{\tilde{\alpha}_n|\hat{\Lambda}_n}}
\parens{-\frac{9}{8k_n}\partial_{\alpha}^2\mathbb{L}_{1,n}\parens{\hat{\alpha}_n|\hat{\Lambda}_n}}^{-1}\parens{\frac{1}{\sqrt{k_n}}\partial_{\alpha}\mathbb{L}_{1,n}\parens{\tilde{\alpha}_n|\hat{\Lambda}_n}}^T,\\
\mathcal{R}_{2,n}&:=\parens{\frac{1}{\sqrt{T_n}}\partial_{\beta}\mathbb{L}_{2,n}\parens{\tilde{\beta}|\hat{\Lambda}_n,\hat{\alpha}_n}}
\parens{-\frac{1}{T_n}\partial_{\beta}^2\mathbb{L}_{2,n}\parens{\hat{\beta}_n|\hat{\Lambda}_n,\hat{\alpha}_n}}^{-1}
\parens{\frac{1}{\sqrt{T_n}}\partial_{\beta}\mathbb{L}_{2,n}\parens{\tilde{\beta}|\hat{\Lambda}_n,\hat{\alpha}_n}}^T.
\end{align*}

\begin{theorem}\label{r0}
	Assume (A1)-(A7), (AT), $H_0$ and $k_n\Delta_n^2\to 0$ hold. Then we have
	\begin{align*}
	\mathcal{R}_{1,n} + \mathcal{R}_{2,n} &\cl \chi_{r}^2.
	\end{align*}
\end{theorem}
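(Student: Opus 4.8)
The plan is to reduce Theorem~\ref{r0} to Theorem~\ref{l0}: I would show that, under $H_0$, the Rao-type statistics are asymptotically equivalent to the likelihood-ratio-type ones, $\mathcal{R}_{1,n}=\mathcal{T}_{1,n}+o_P(1)$ and $\mathcal{R}_{2,n}=\mathcal{T}_{2,n}+o_P(1)$, and then invoke Theorem~\ref{l0}. Since $\mathcal{R}_{i,n}=\mathcal{T}_{i,n}=0$ whenever $r_i=0$, we may assume $r_1,r_2>0$. Write $\alpha=(\alpha_1,\alpha_2)$ with $\alpha_1=(\alpha^{(1)},\dots,\alpha^{(r_1)})$ the block fixed at $0$ under $H_0$, and likewise $\beta=(\beta_1,\beta_2)$; then $\alpha^\star=(0,\alpha_2^\star)$, the constrained estimators are $\tilde\alpha_n=(0,\tilde\alpha_{2,n})$ and $\tilde\beta_n=(0,\tilde\beta_{2,n})$, and eventually the interior first-order conditions $\partial_{\alpha_2}\mathbb{L}_{1,n}(\tilde\alpha_n|\hat\Lambda_n)=0$ and $\partial_{\beta_2}\mathbb{L}_{2,n}(\tilde\beta_n|\hat\Lambda_n,\hat\alpha_n)=0$ hold. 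I would treat the diffusion block in detail; the drift block is handled identically.

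First I would collect the ingredients, all furnished by \citep{NU17} and by the proof of Theorem~\ref{l0}: (i) under $H_0$, $\hat\alpha_n,\tilde\alpha_n$ are $\sqrt{k_n}$-consistent and $\hat\beta_n,\tilde\beta_n$ are $\sqrt{T_n}$-consistent --- for the constrained estimators this uses (A6) restricted to $\Theta_{0,1},\Theta_{0,2}$ together with (R1); (ii) $-\tfrac{1}{k_n}\partial_\alpha^2\mathbb{L}_{1,n}(\alpha|\hat\Lambda_n)$ and $-\tfrac{1}{T_n}\partial_\beta^2\mathbb{L}_{2,n}(\beta|\hat\Lambda_n,\hat\alpha_n)$ converge in probability, uniformly on neighbourhoods of $\alpha^\star$ and $\beta^\star$, to continuous deterministic matrices taking the values $J^{(2,2)}(\vartheta^\star)$ and $J^{(3,3)}(\vartheta^\star)$ at $\alpha^\star$ and $\beta^\star$, so in particular $-\tfrac{9}{8k_n}\partial_\alpha^2\mathbb{L}_{1,n}(\hat\alpha_n|\hat\Lambda_n)\cp \tfrac98 J^{(2,2)}(\vartheta^\star)$ and $-\tfrac{1}{T_n}\partial_\beta^2\mathbb{L}_{2,n}(\hat\beta_n|\hat\Lambda_n,\hat\alpha_n)\cp J^{(3,3)}(\vartheta^\star)$, both invertible by (R1); and (iii) the joint score central limit theorem
\begin{align*}
\parens{\tfrac{1}{\sqrt{k_n}}\partial_\alpha\mathbb{L}_{1,n}(\alpha^\star|\hat\Lambda_n),\ \tfrac{1}{\sqrt{T_n}}\partial_\beta\mathbb{L}_{2,n}(\beta^\star|\hat\Lambda_n,\hat\alpha_n)}\cl \mathcal{N}\parens{\mathbf{0},\ \mathrm{diag}\parens{\tfrac98 J^{(2,2)}(\vartheta^\star),\ J^{(3,3)}(\vartheta^\star)}}.
\end{align*}
The $\tfrac98$-inflation of the diffusion score covariance (relative to the Hessian limit $J^{(2,2)}(\vartheta^\star)$) comes from the correlation between consecutive summands of $\mathbb{L}_{1,n}$, which share a local mean; the point of (AT) --- as opposed to the weaker (AH) of \citep{NU17} with $\tau\in(1,2]$ --- is precisely that this inflation factor is the constant $\tfrac98$, so that the renormalised negative Hessian $-\tfrac{9}{8k_n}\partial_\alpha^2\mathbb{L}_{1,n}$ converges to the very covariance appearing in (iii). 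It is also where replacing $(\Lambda^\star,\alpha^\star)$ by $(\hat\Lambda_n,\hat\alpha_n)$ in the scores is shown to cost only $o_P(1)$, by the adaptive (asymptotic orthogonality) structure of \citep{NU17}.

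Granting these, the rest is the classical one-step expansion. Expanding $\partial_\alpha\mathbb{L}_{1,n}(\cdot|\hat\Lambda_n)$ at $\alpha^\star$, using $\partial_{\alpha_2}\mathbb{L}_{1,n}(\tilde\alpha_n|\hat\Lambda_n)=0$ and (ii) (with $J_{\bullet\bullet}$ the blocks of $J^{(2,2)}(\vartheta^\star)$) gives $\sqrt{k_n}(\tilde\alpha_{2,n}-\alpha_2^\star)=(J_{22})^{-1}\tfrac{1}{\sqrt{k_n}}\partial_{\alpha_2}\mathbb{L}_{1,n}(\alpha^\star|\hat\Lambda_n)+o_P(1)$, hence the ``effective score''
\begin{align*}
S_{1,n}:=\tfrac{1}{\sqrt{k_n}}\partial_{\alpha_1}\mathbb{L}_{1,n}(\tilde\alpha_n|\hat\Lambda_n)=\tfrac{1}{\sqrt{k_n}}\partial_{\alpha_1}\mathbb{L}_{1,n}(\alpha^\star|\hat\Lambda_n)-J_{12}(J_{22})^{-1}\tfrac{1}{\sqrt{k_n}}\partial_{\alpha_2}\mathbb{L}_{1,n}(\alpha^\star|\hat\Lambda_n)+o_P(1),
\end{align*}
which by (iii) satisfies $S_{1,n}\cl\mathcal{N}(\mathbf{0},\tfrac98 J_{11\cdot2})$ with $J_{11\cdot2}:=J_{11}-J_{12}(J_{22})^{-1}J_{21}$. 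Because the $\alpha_2$-block of $\partial_\alpha\mathbb{L}_{1,n}(\tilde\alpha_n|\hat\Lambda_n)$ vanishes, substituting $-\tfrac{9}{8k_n}\partial_\alpha^2\mathbb{L}_{1,n}(\hat\alpha_n|\hat\Lambda_n)\cp \tfrac98 J^{(2,2)}(\vartheta^\star)$ into $\mathcal{R}_{1,n}$ and reading off the top-left block of $(\tfrac98 J^{(2,2)}(\vartheta^\star))^{-1}$ via the Schur-complement identity yields $\mathcal{R}_{1,n}=S_{1,n}^{T}(\tfrac98 J_{11\cdot2})^{-1}S_{1,n}+o_P(1)$. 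A parallel second-order expansion of $\mathbb{L}_{1,n}(\hat\alpha_n|\hat\Lambda_n)-\mathbb{L}_{1,n}(\tilde\alpha_n|\hat\Lambda_n)$ about $\hat\alpha_n$, together with $\sqrt{k_n}(\hat\alpha_n-\alpha^\star)=(J^{(2,2)}(\vartheta^\star))^{-1}\tfrac{1}{\sqrt{k_n}}\partial_\alpha\mathbb{L}_{1,n}(\alpha^\star|\hat\Lambda_n)+o_P(1)$, shows that the prefactor $\tfrac{16}{9}$ is exactly what makes $\mathcal{T}_{1,n}=S_{1,n}^{T}(\tfrac98 J_{11\cdot2})^{-1}S_{1,n}+o_P(1)$ too; since $S_{1,n}\cl\mathcal{N}(\mathbf{0},\tfrac98 J_{11\cdot2})$, both converge to $\chi^2_{r_1}$ and $\mathcal{R}_{1,n}=\mathcal{T}_{1,n}+o_P(1)$. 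The drift block is identical with $(k_n,\tfrac98 J^{(2,2)},\hat\alpha_n,\tilde\alpha_n,\tfrac{16}{9})$ replaced by $(T_n,J^{(3,3)},\hat\beta_n,\tilde\beta_n,2)$ --- here there is no inflation, so the normalisation $-\tfrac1{T_n}\partial_\beta^2\mathbb{L}_{2,n}$ already matches the score covariance --- giving $\mathcal{R}_{2,n}=\mathcal{T}_{2,n}+o_P(1)$. Adding and applying Theorem~\ref{l0}, $\mathcal{R}_{1,n}+\mathcal{R}_{2,n}=\mathcal{T}_{1,n}+\mathcal{T}_{2,n}+o_P(1)\cl\chi^2_r$.

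The main obstacle is ingredient (iii): proving the joint score central limit theorem with exactly these covariances, in particular pinning down the $\tfrac98$ factor. The delicacy is twofold --- consecutive summands of $\mathbb{L}_{1,n}$ share a local mean $\lm{Y}{j}$, so the score is a sum of (approximately) $1$-dependent rather than martingale-difference terms, and the scores are evaluated at the data-dependent $\hat\Lambda_n$ and $\hat\alpha_n$, whose estimation error must be shown to be negligible at the relevant rates ($o_P(1)$ after dividing by $\sqrt{k_n}$, resp.\ $\sqrt{T_n}$). Both points are contained in, or are routine adaptations of, the proof of Theorem~\ref{l0} and \citep{NU17}; the only genuinely new items are the $\sqrt{k_n}$- and $\sqrt{T_n}$-consistency and the locally uniform Hessian control for the constrained estimators $\tilde\alpha_n,\tilde\beta_n$, which follow from the same stochastic-equicontinuity estimates restricted to $\Theta_{0,1},\Theta_{0,2}$, invoking (A6) and (R1).
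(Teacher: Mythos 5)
Your proposal is correct and takes essentially the same route as the paper: both hinge on representing the constrained score as a projection of the score at the true value (your effective score $S_{1,n}=\tfrac{1}{\sqrt{k_n}}\partial_{\alpha_1}\mathbb{L}_{1,n}(\alpha^\star|\hat\Lambda_n)-J_{12}J_{22}^{-1}\tfrac{1}{\sqrt{k_n}}\partial_{\alpha_2}\mathbb{L}_{1,n}(\alpha^\star|\hat\Lambda_n)+o_P(1)$ is exactly the paper's $(I-J^{(2,2)}H^{(2,2)})$ applied to $\tfrac{1}{\sqrt{k_n}}(\partial_\alpha\mathbb{L}_{1,n})^T(\alpha^\star|\hat\Lambda_n)$, taken from the proof of Theorem~\ref{l0}), on the joint score CLT from \citep{NU17} with covariance $\mathrm{diag}(\tfrac{9}{8}J^{(2,2)},J^{(3,3)})$, and on convergence of the rescaled Hessians, so your detour through $\mathcal{R}_{i,n}=\mathcal{T}_{i,n}+o_P(1)$ is harmless but unnecessary since the paper evaluates the limit of $\mathcal{R}_{1,n}$ directly. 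One minor remark: (R1) is not among the hypotheses of this theorem, but you only need positive definiteness of $J^{(2,2)}(\vartheta^\star)$ (and of its relevant sub-block) at the true parameter, which already follows from (A6), so nothing is lost.
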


\begin{theorem}\label{r1}
	Assume (A1)-(A7), (AT), (R1) and $H_1$ hold. Then for all $M>0$,
	\begin{align*}
	P\parens{\mathcal{R}_{1,n} + \mathcal{R}_{2,n}\le M}\to 0.
	\end{align*}
\end{theorem}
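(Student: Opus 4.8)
The plan follows the standard route for proving consistency of a score-type test: under $H_1$ the normalised restricted score entering $\mathcal{R}_{1,n}$ or $\mathcal{R}_{2,n}$ must blow up while its companion information matrix stays bounded and non-singular, so that quadratic form diverges in probability; since $\mathcal{R}_{1,n}$ and $\mathcal{R}_{2,n}$ are non-negative (with probability tending to one, by the positive definiteness below) it is enough to make one of them diverge. Under $H_1$ the true value $\theta^\star$ violates at least one constraint block, so $\alpha^\star\notin\Theta_{0,1}$ or $\beta^\star\notin\Theta_{0,2}$ (when $r_1=0$ only the second is possible, and when $r_2=0$ only the first). I would treat the diffusion case $\alpha^\star\notin\Theta_{0,1}$; the drift case is identical after replacing $(k_n,\mathbb{L}_{1,n},\alpha,\Theta_{0,1},\mathbb{Y}_1,J^{(2,2)})$ by $(T_n,\mathbb{L}_{2,n},\beta,\Theta_{0,2},\mathbb{Y}_2,J^{(3,3)})$ and additionally invoking $\hat\alpha_n\cp\alpha^\star$ to replace the plugged-in $\hat\alpha_n$ by $\alpha^\star$ in every limit.

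None of the estimator limits use $H_0$: as in the proofs of Theorems \ref{l0}--\ref{l1} and the estimation results of \citep{NU17}, $\hat\Lambda_n\cp\Lambda^\star$, $\hat\alpha_n\cp\alpha^\star$, and $\tilde\alpha_n\cp\alpha^\dagger:=\arg\max_{\alpha\in\Theta_{0,1}}\mathbb{Y}_1(\alpha)$, the last maximiser being unique by (A6) and, crucially, $\alpha^\dagger\neq\alpha^\star$ because $\alpha^\star\notin\Theta_{0,1}$; these rest on the uniform convergence in probability on $\Theta_1$ of $\frac{1}{k_n}\mathbb{L}_{1,n}(\cdot|\hat\Lambda_n)$ and of its first two $\alpha$-derivatives. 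Writing $v_n:=\frac{1}{\sqrt{k_n}}\partial_\alpha\mathbb{L}_{1,n}(\tilde\alpha_n|\hat\Lambda_n)$ and $M_n:=-\frac{9}{8k_n}\partial_\alpha^2\mathbb{L}_{1,n}(\hat\alpha_n|\hat\Lambda_n)$, the Hessian convergence with $\hat\alpha_n\cp\alpha^\star$ gives $M_n\cp J^{(2,2)}(\vartheta^\star)$, which is positive definite by (R1), so $\lambda_{\max}(M_n)=O_P(1)$ and $\mathcal{R}_{1,n}=v_nM_n^{-1}v_n^{T}\ge\norm{v_n}^2/\lambda_{\max}(M_n)$. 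Because $\tilde\alpha_n$ maximises $\mathbb{L}_{1,n}(\cdot|\hat\Lambda_n)$ over $\Theta_{0,1}$ and eventually lies in its relative interior, the free-coordinate components of the score vanish with probability tending to one, whence $\norm{v_n}^2=k_n\sum_{i=1}^{r_1}\parens{\frac{1}{k_n}\partial_{\alpha^{(i)}}\mathbb{L}_{1,n}(\tilde\alpha_n|\hat\Lambda_n)}^2$, and each bracket converges in probability to $\kappa\,\partial_{\alpha^{(i)}}\mathbb{Y}_1(\alpha^\dagger)$ for a fixed $\kappa>0$.

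The technical heart --- and the step I expect to be the main obstacle --- is to show $\partial_{\alpha^{(i)}}\mathbb{Y}_1(\alpha^\dagger)\neq0$ for at least one $i\le r_1$; this is exactly what rules out inconsistency of the score test, since if the limiting restricted gradient were zero then $\norm{v_n}^2$ would be $O_P(1)$. Equivalently, $\alpha^\dagger$ must not be a stationary point of $\mathbb{Y}_1$ on $\Theta_1$: the free-direction derivatives of $\mathbb{Y}_1$ already vanish at $\alpha^\dagger$, so one argues by contradiction that any full stationary point of $\mathbb{Y}_1$ equals $\alpha^\star$, using the strict identifiability bound in (A6) together with the condition $\inf_\alpha\lambda_{\min}(J^{(2,2)}(\alpha|\vartheta^\star))>0$ supplied by (R1); this contradicts $\alpha^\dagger\neq\alpha^\star$. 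Granting this, $\norm{v_n}^2=k_n\parens{\kappa^2\sum_{i\le r_1}\parens{\partial_{\alpha^{(i)}}\mathbb{Y}_1(\alpha^\dagger)}^2+o_P(1)}\to\infty$ since $k_n\to\infty$, hence $\mathcal{R}_{1,n}\to\infty$ in probability.

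The drift case is handled by the same three steps but with rate $T_n=nh_n\to\infty$: $\hat\beta_n\cp\beta^\star$ and $\tilde\beta_n\cp\beta^\dagger:=\arg\max_{\beta\in\Theta_{0,2}}\mathbb{Y}_2(\beta)\neq\beta^\star$; $-\frac{1}{T_n}\partial_\beta^2\mathbb{L}_{2,n}(\hat\beta_n|\hat\Lambda_n,\hat\alpha_n)\cp J^{(3,3)}(\vartheta^\star)$ (also using $\hat\alpha_n\cp\alpha^\star$ for the plug-in), positive definite by (R1); and $\frac{1}{T_n}\partial_\beta\mathbb{L}_{2,n}(\tilde\beta_n|\hat\Lambda_n,\hat\alpha_n)\cp$ a positive multiple of $\partial_\beta\mathbb{Y}_2(\beta^\dagger)$, nonzero by the same (A6)--(R1) argument, so the corresponding $\norm{\,\cdot\,}^2$ is of order $T_n$ and $\mathcal{R}_{2,n}\to\infty$ in probability. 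In either case, for every $M>0$, $P\parens{\mathcal{R}_{1,n}+\mathcal{R}_{2,n}\le M}\le P\parens{\mathcal{R}_{i,n}\le M}\to0$, which is the assertion.
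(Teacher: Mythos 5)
Your overall architecture (reduce to one divergent block, lower-bound the quadratic form by $\norm{v_n}^2/\lambda_{\max}(M_n)$, show the restricted score is of exact order $\sqrt{k_n}$) parallels the paper's, but there is a genuine gap at precisely the step you flag as the technical heart: proving $\partial_{\alpha^{(i)}}\mathbb{Y}_1(\alpha^\dagger)\neq 0$ for some $i\le r_1$. The contradiction you sketch does not go through. (A6) bounds only the \emph{values} of $\mathbb{Y}_1$, so it excludes global maximisers other than $\alpha^\star$ but says nothing about other stationary points; and (R1) controls $J^{(2,2)}\parens{\alpha|\vartheta^{\star}}$, which coincides with $-\partial_{\alpha}^2\mathbb{Y}_1(\alpha)$ only at $\alpha=\alpha^\star$ --- away from the truth the Hessian of $\mathbb{Y}_1$ carries extra terms involving $c(\cdot,\alpha)-c(\cdot,\alpha^\star)$ and $\partial_{\alpha}^2c$, so uniform nondegeneracy of $J^{(2,2)}\parens{\cdot|\vartheta^{\star}}$ does not forbid $\mathbb{Y}_1$ from having a critical point inside $\Theta_{0,1}$. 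As written, your argument leaves open exactly the classical failure mode of Rao-type tests: if the restricted pseudo-true value $\alpha^\dagger$ is a stationary point of the unrestricted population criterion, then $\norm{v_n}^2=O_P(1)$ rather than of order $k_n$, and the test is not consistent along your route.

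The paper sidesteps this with a different decomposition. It Taylor-expands the empirical score at $\tilde{\alpha}_n$ around the \emph{unrestricted} maximiser $\hat{\alpha}_n$, where the score vanishes exactly, so that $\frac{1}{k_n}\parens{\partial_{\alpha}\mathbb{L}_{1,n}}^T\parens{\tilde{\alpha}_n|\hat{\Lambda}_n}=\parens{\int_{0}^{1}\frac{1}{k_n}\partial_{\alpha}^2\mathbb{L}_{1,n}\parens{\hat{\alpha}_n+u\parens{\tilde{\alpha}_n-\hat{\alpha}_n}|\hat{\Lambda}_n}\dop u}\parens{\tilde{\alpha}_n-\hat{\alpha}_n}$; it then replaces the integrated Hessian by $\int_{0}^{1}J^{(2,2)}\parens{\hat{\alpha}_n+u\parens{\tilde{\alpha}_n-\hat{\alpha}_n}|\vartheta^{\star}}\dop u+o_P(1)$ and uses (R1) to bound its smallest eigenvalue away from zero uniformly along the segment. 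Since the constrained coordinate satisfies $\tilde{\alpha}_n^{(\ell_1)}=0$, this yields $\frac{1}{k_n}\mathcal{R}_{1,n}\ge Q\norm{\tilde{\alpha}_n-\hat{\alpha}_n}^2+o_P(1)\ge Q\abs{\hat{\alpha}_n^{(\ell_1)}}^2+o_P(1)\cp Q\abs{\parens{\alpha^{\star}}^{(\ell_1)}}^2>0$ for a constant $Q>0$, using only the consistency of $\hat{\alpha}_n$; no identification of $\alpha^\dagger$ and no statement about $\partial_{\alpha}\mathbb{Y}_1(\alpha^\dagger)$ are needed. That is where (R1) is actually consumed. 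To repair your proof you should either establish the nonvanishing of the restricted population score from additional structure (the stated assumptions do not provide it) or adopt the paper's expansion around $\hat{\alpha}_n$.
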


Furthermore, let us define the following Wald-type statistics:
\begin{align*}
	\mathcal{W}_{1,n}&=k_n\parens{\hat{\alpha}_n-\tilde{\alpha}_n}^T
	\parens{-\frac{9}{8k_n}\partial_{\alpha}^2\mathbb{L}_{1,n}\parens{\hat{\alpha}_n|\hat{\Lambda}_n}}
	\parens{\hat{\alpha}_n-\tilde{\alpha}_n}\\
	\mathcal{W}_{2,n}&=T_n\parens{\hat{\beta}_n-\tilde{\beta}_n}^T
	\parens{-\frac{1}{T_n}\partial_{\beta}^2\mathbb{L}_{2,n}\parens{\hat{\beta}_n|\hat{\Lambda}_n,\hat{\alpha}_n}}
	\parens{\hat{\beta}_n-\tilde{\beta}_n}.
\end{align*}

\begin{theorem}\label{w0}
	Assume (A1)-(A7), (AT), $H_0$ and $k_n\Delta_n^2\to 0$ hold. Then we have
	\begin{align*}
	\mathcal{W}_{1,n} + \mathcal{W}_{2,n} &\cl \chi_{r}^2.
	\end{align*}
\end{theorem}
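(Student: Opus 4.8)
The plan is to treat $\mathcal{W}_{1,n}+\mathcal{W}_{2,n}$ as the quadratic‑form (Wald) counterpart of the likelihood‑ratio‑type statistic and to reduce it, by a second–order Taylor expansion of the two quasi‑likelihoods, to the object already handled in Theorem~\ref{l0}, thereby inheriting the limit $\chi^2_r$ from there. Concretely, I would first collect the ingredients that are already needed for (and hence available from) the proofs of Theorems~\ref{l0}--\ref{r0} and the adaptive estimation theory of \citep{NU17}: under (A1)--(A7), (AT), $H_0$ and $k_n\Delta_n^2\to0$ we have consistency $\hat{\alpha}_n,\tilde{\alpha}_n\cp\alpha^\star$ and $\hat{\beta}_n,\tilde{\beta}_n\cp\beta^\star$; a locally uniform law of large numbers for the normalised observed informations, so that $-\frac{9}{8k_n}\partial_\alpha^2\mathbb{L}_{1,n}\parens{\bar{\alpha}_n|\hat{\Lambda}_n}\cp J^{(2,2)}\parens{\vartheta^\star}$ for every $\bar{\alpha}_n\cp\alpha^\star$ and $-\frac{1}{T_n}\partial_\beta^2\mathbb{L}_{2,n}\parens{\bar{\beta}_n|\hat{\Lambda}_n,\hat{\alpha}_n}\cp J^{(3,3)}\parens{\vartheta^\star}$, both limits positive definite; and joint asymptotic normality together with asymptotic independence of the scaled scores $\frac{1}{\sqrt{k_n}}\partial_\alpha\mathbb{L}_{1,n}\parens{\alpha^\star|\hat{\Lambda}_n}$ and $\frac{1}{\sqrt{T_n}}\partial_\beta\mathbb{L}_{2,n}\parens{\beta^\star|\hat{\Lambda}_n,\hat{\alpha}_n}$, with asymptotic covariances matched to $J^{(2,2)}\parens{\vartheta^\star}$ and $J^{(3,3)}\parens{\vartheta^\star}$ respectively — this matching being exactly the role of (AT) emphasised in the remark.

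Next I would carry out the expansion for the $\alpha$–block. Since $\hat{\alpha}_n$ is interior with probability tending to one, the first–order condition $\partial_\alpha\mathbb{L}_{1,n}\parens{\hat{\alpha}_n|\hat{\Lambda}_n}=0$ expanded about $\tilde{\alpha}_n$ gives
$\sqrt{k_n}\parens{\hat{\alpha}_n-\tilde{\alpha}_n}=\parens{-\tfrac{1}{k_n}\partial_\alpha^2\mathbb{L}_{1,n}\parens{\alpha_n^\dagger|\hat{\Lambda}_n}}^{-1}\tfrac{1}{\sqrt{k_n}}\partial_\alpha\mathbb{L}_{1,n}\parens{\tilde{\alpha}_n|\hat{\Lambda}_n}$
for an intermediate $\alpha_n^\dagger$ on the segment between $\hat{\alpha}_n$ and $\tilde{\alpha}_n$, and by the uniform convergence of the Hessian the matrix on the right is $\frac{8}{9}J^{(2,2)}\parens{\vartheta^\star}+o_P(1)$. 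Substituting this expression into the definition of $\mathcal{W}_{1,n}$ and using once more $-\frac{9}{8k_n}\partial_\alpha^2\mathbb{L}_{1,n}\parens{\hat{\alpha}_n|\hat{\Lambda}_n}\cp J^{(2,2)}\parens{\vartheta^\star}$ collapses $\mathcal{W}_{1,n}$, up to $o_P(1)$, to the same quadratic form in $\frac{1}{\sqrt{k_n}}\partial_\alpha\mathbb{L}_{1,n}\parens{\tilde{\alpha}_n|\hat{\Lambda}_n}$ to which $\mathcal{T}_{1,n}$ reduces after its own Taylor expansion; in other words $\mathcal{W}_{1,n}-\mathcal{T}_{1,n}\cp0$. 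The identical argument, now with the $\frac{1}{T_n}$–normalisation and with the consistent $\hat{\alpha}_n$ carried as a plug‑in inside $\mathbb{L}_{2,n}$, gives $\mathcal{W}_{2,n}-\mathcal{T}_{2,n}\cp0$. Adding the two and invoking Theorem~\ref{l0} and Slutsky's lemma yields $\mathcal{W}_{1,n}+\mathcal{W}_{2,n}\cl\chi^2_r$.

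If one prefers a self‑contained computation rather than a reduction to Theorem~\ref{l0}, the same expansions lead directly to the limit through the usual null‑restriction bookkeeping: because $H_0$ constrains only the first $r_1$ (resp.\ $r_2$) coordinates and $\tilde{\alpha}_n$ (resp.\ $\tilde{\beta}_n$) solves the score equation in the remaining free coordinates, $\sqrt{k_n}\parens{\hat{\alpha}_n-\tilde{\alpha}_n}$ equals, up to $o_P(1)$, a fixed linear image of $\sqrt{k_n}$ times the first $r_1$ coordinates of $\hat{\alpha}_n$, and the quadratic form is governed by the corresponding Schur complement of $J^{(2,2)}\parens{\vartheta^\star}$; since that same Schur complement is the inverse asymptotic covariance of those $r_1$ coordinates, $\mathcal{W}_{1,n}\cl\chi^2_{r_1}$, and symmetrically $\mathcal{W}_{2,n}\cl\chi^2_{r_2}$. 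The asymptotic independence of the $\alpha$– and $\beta$–scores (built into the adaptive construction, and reinforced by the different rates $k_n\gg T_n$) then makes the two limits independent, so $\mathcal{W}_{1,n}+\mathcal{W}_{2,n}\cl\chi^2_{r_1+r_2}=\chi^2_r$.

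The hard part will not be the algebra of the Taylor expansions but the stochastic uniformity underneath them: one needs the locally uniform convergence of $-\frac{1}{k_n}\partial_\alpha^2\mathbb{L}_{1,n}\parens{\cdot|\hat{\Lambda}_n}$ over a shrinking ball around $\alpha^\star$ together with uniform non‑degeneracy (to legitimately replace the Hessian at the intermediate and estimated points by its deterministic limit and to invert it), and one must verify that the plugged‑in $\hat{\Lambda}_n$ and $\hat{\alpha}_n$ do not perturb either the Hessian limit or the score's Gaussian limit — which is precisely where the adaptive structure, the exponent range $\tau\in(1,2)$ in (AT), and the extra balance condition $k_n\Delta_n^2\to0$ are used, and where the normalising constants $9/8$ and $16/9$ must be threaded through consistently so that the sandwich $(-\partial^2\mathbb{L})^{-1}(\partial\mathbb{L})(\partial\mathbb{L})^T(-\partial^2\mathbb{L})^{-1}$ telescopes to the advertised $\chi^2$. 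All of these are already required for Theorems~\ref{l0} and~\ref{r0}, so no genuinely new estimate is needed beyond reorganising them.
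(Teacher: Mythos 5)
Your overall strategy --- mean-value expansion of the score between $\hat{\alpha}_n$ and $\tilde{\alpha}_n$, substitution of $\sqrt{k_n}\parens{\hat{\alpha}_n-\tilde{\alpha}_n}$ into the quadratic form, and reuse of the limit law of the restricted score from the proof of Theorem \ref{l0} --- is exactly the paper's route. However, your central reduction $\mathcal{W}_{1,n}-\mathcal{T}_{1,n}\cp 0$ fails with the normalisations actually in force, and the failure sits precisely in the constant you yourself warn must be ``threaded through consistently''. The facts used in the proof of Theorem \ref{l0} are $-k_n^{-1}\partial_{\alpha}^2\mathbb{L}_{1,n}\cp J^{(2,2)}\parens{\vartheta^{\star}}$ together with the score-variance limit $I^{(2,2)}=\tfrac{9}{8}J^{(2,2)}$, so that $\sqrt{k_n}\parens{\hat{\alpha}_n-\alpha^{\star}}$ has asymptotic covariance $\tfrac{9}{8}\parens{J^{(2,2)}}^{-1}$; your assertion that $-\tfrac{9}{8k_n}\partial_{\alpha}^2\mathbb{L}_{1,n}\cp J^{(2,2)}$ contradicts this and is what lets the constants cancel spuriously. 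With the correct Hessian limit, the second-order Taylor expansion gives $\mathcal{T}_{1,n}=\tfrac{8}{9}k_n\parens{\hat{\alpha}_n-\tilde{\alpha}_n}^TJ^{(2,2)}\parens{\hat{\alpha}_n-\tilde{\alpha}_n}+o_P(1)$, whereas by definition $\mathcal{W}_{1,n}=\tfrac{9}{8}k_n\parens{\hat{\alpha}_n-\tilde{\alpha}_n}^TJ^{(2,2)}\parens{\hat{\alpha}_n-\tilde{\alpha}_n}+o_P(1)$: the two statistics differ asymptotically by the factor $\parens{9/8}^2$, so they cannot both converge to $\chi^2_{r_1}$. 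The same $\parens{9/8}^2$ mismatch afflicts your Schur-complement variant, since the inverse asymptotic covariance of the tested coordinates is $\tfrac{8}{9}$ times the relevant Schur complement of $J^{(2,2)}$, not $\tfrac{9}{8}$ times it.

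In fairness, the paper is not clean here either: its proof of Theorem \ref{w0} silently replaces $\parens{J^{(2,2)}}^{-1}\parens{\tfrac{9}{8}J^{(2,2)}}\parens{J^{(2,2)}}^{-1}=\tfrac{9}{8}\parens{J^{(2,2)}}^{-1}$ by $\parens{\tfrac{9}{8}J^{(2,2)}}^{-1}=\tfrac{8}{9}\parens{J^{(2,2)}}^{-1}$, which is the same factor-$\parens{9/8}^2$ slip. The conclusion $\mathcal{W}_{1,n}\cl\chi^2_{r_1}$, and the asymptotic equivalence with $\mathcal{T}_{1,n}$ that your argument relies on, hold only if the weight in $\mathcal{W}_{1,n}$ is $-\tfrac{8}{9k_n}\partial_{\alpha}^2\mathbb{L}_{1,n}\parens{\hat{\alpha}_n|\hat{\Lambda}_n}$, i.e.\ a consistent estimator of the inverse asymptotic covariance $\tfrac{8}{9}J^{(2,2)}$. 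You should either adopt that corrected weight (after which your reduction to Theorem \ref{l0} goes through) or fix the Hessian normalisation against Theorem 3.1.3 of \citep{NU17}; as written, the key step of your proof is false. The remaining ingredients you assemble --- consistency, locally uniform convergence of the Hessian, the representation $\parens{I-J^{(2,2)}H^{(2,2)}}k_n^{-1/2}\parens{\partial_{\alpha}\mathbb{L}_{1,n}}^T\parens{\alpha^{\star}|\hat{\Lambda}_n}$ for the restricted score, and the joint asymptotic independence of the $\alpha$- and $\beta$-blocks --- coincide with the paper's and are sound.
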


\begin{theorem}\label{w1}
	Assume (A1)-(A7), (AT) and $H_1$ hold. Then for all $M>0$,
	\begin{align*}
	P\parens{\mathcal{W}_{1,n} + \mathcal{W}_{2,n}\le M}\to 0.
	\end{align*}
\end{theorem}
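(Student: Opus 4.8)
The plan is to show that under $H_1$ one of the two Wald statistics is forced to diverge in probability while the other stays non-negative with probability tending to one, whence $\mathcal{W}_{1,n}+\mathcal{W}_{2,n}\cp\infty$. Two facts, both already needed for the $H_0$-theorems and available from the estimation theory of \citep{NU17}, go into this: (i) the unconstrained adaptive estimators are consistent, $\hat{\Lambda}_n\cp\Lambda^\star$, $\hat{\alpha}_n\cp\alpha^\star$, $\hat{\beta}_n\cp\beta^\star$ (this does not use $H_0$, since $\hat{\alpha}_n,\hat{\beta}_n$ maximise the quasi-likelihoods over the whole parameter spaces); and (ii) the normalised negative Hessians converge in probability to positive definite limits,
\begin{align*}
-\frac{9}{8k_n}\partial_{\alpha}^2\mathbb{L}_{1,n}\parens{\hat{\alpha}_n|\hat{\Lambda}_n}&\cp\Gamma_1,\\
-\frac{1}{T_n}\partial_{\beta}^2\mathbb{L}_{2,n}\parens{\hat{\beta}_n|\hat{\Lambda}_n,\hat{\alpha}_n}&\cp\Gamma_2,
\end{align*}
where $\Gamma_1$ is a positive multiple of $J^{(2,2)}(\vartheta^\star)$ and $\Gamma_2$ a positive multiple of $J^{(3,3)}(\vartheta^\star)$. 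These limits are positive definite: one checks directly that $\mathbb{Y}_1$ attains its maximum value $0$ at $\alpha^\star$ with $\partial_{\alpha}\mathbb{Y}_1(\alpha^\star)=0$ and Hessian $-J^{(2,2)}(\vartheta^\star)$, so a Taylor expansion of $\mathbb{Y}_1$ about $\alpha^\star$ turns the identifiability bound (A6), $\mathbb{Y}_1(\alpha)\le-\chi\norm{\alpha-\alpha^\star}^2$, into $J^{(2,2)}(\vartheta^\star)\ge 2\chi I_{m_1}$; the same argument applied to $\mathbb{Y}_2$ gives $J^{(3,3)}(\vartheta^\star)\ge 2\tilde{\chi}I_{m_2}$. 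In particular (R1), which was used for the Rao-type test, is not needed here.

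Next I would use that $H_1$ forces at least one genuinely nonzero constrained coordinate: either $r_1>0$ and $(\alpha^\star)^{(i_0)}\neq 0$ for some $i_0\le r_1$, or $r_2>0$ and $(\beta^\star)^{(j_0)}\neq 0$ for some $j_0\le r_2$ (if $r_1=0$ then $\Theta_{0,1}=\Theta_1$, $\tilde{\alpha}_n=\hat{\alpha}_n$ and $\mathcal{W}_{1,n}\equiv 0$, and similarly for $\mathcal{W}_{2,n}$ when $r_2=0$). I would treat the first case; the second is identical with $k_n$ replaced by $T_n$, $\mathbb{L}_{1,n}$ by $\mathbb{L}_{2,n}$ and $\Gamma_1$ by $\Gamma_2$. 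By construction $\tilde{\alpha}_n\in\Theta_{0,1}$ has its first $r_1$ coordinates equal to zero, so $\tilde{\alpha}_n^{(i_0)}=0$, whereas $\hat{\alpha}_n^{(i_0)}\cp(\alpha^\star)^{(i_0)}\neq 0$; hence $\norm{\hat{\alpha}_n-\tilde{\alpha}_n}^2\ge\abs{\hat{\alpha}_n^{(i_0)}-\tilde{\alpha}_n^{(i_0)}}^2=\abs{\hat{\alpha}_n^{(i_0)}}^2\ge\tfrac14\abs{(\alpha^\star)^{(i_0)}}^2$ with probability tending to one. Combining this with $\lambda_{\min}\parens{-\frac{9}{8k_n}\partial_{\alpha}^2\mathbb{L}_{1,n}(\hat{\alpha}_n|\hat{\Lambda}_n)}\ge\tfrac12\lambda_{\min}(\Gamma_1)=:c_1>0$, which also holds with probability tending to one, gives on an event of probability tending to one
\begin{align*}
\mathcal{W}_{1,n}&=k_n\parens{\hat{\alpha}_n-\tilde{\alpha}_n}^T\parens{-\tfrac{9}{8k_n}\partial_{\alpha}^2\mathbb{L}_{1,n}(\hat{\alpha}_n|\hat{\Lambda}_n)}\parens{\hat{\alpha}_n-\tilde{\alpha}_n}\\
&\ge c_1 k_n\norm{\hat{\alpha}_n-\tilde{\alpha}_n}^2\ge\frac{c_1}{4}\abs{(\alpha^\star)^{(i_0)}}^2 k_n,
\end{align*}
so that $\mathcal{W}_{1,n}\cp\infty$ because $k_n\to\infty$.

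Finally, the companion statistic stays non-negative: since $-\frac{1}{T_n}\partial_{\beta}^2\mathbb{L}_{2,n}(\hat{\beta}_n|\hat{\Lambda}_n,\hat{\alpha}_n)\cp\Gamma_2$ with $\Gamma_2$ positive definite, $P(\mathcal{W}_{2,n}\ge 0)\to 1$. Hence for every $M>0$,
\begin{align*}
P\parens{\mathcal{W}_{1,n}+\mathcal{W}_{2,n}\le M}\le P\parens{\mathcal{W}_{1,n}\le M}+P\parens{\mathcal{W}_{2,n}<0}\to 0,
\end{align*}
and the symmetric computation covers the case in which $H_0$ fails through the drift block. I expect the only non-routine ingredient to be (ii): proving $-\frac{9}{8k_n}\partial_{\alpha}^2\mathbb{L}_{1,n}(\hat{\alpha}_n|\hat{\Lambda}_n)\cp\Gamma_1$ and its drift analogue requires a uniform-in-parameter law of large numbers for the triangular array of local-mean increments $\lm{Y}{j+1}-\lm{Y}{j}$ under (A1)--(A7) and (AT), together with control of the vanishing correction term $3\Delta_n^{\frac{2-\tau}{\tau-1}}\hat{\Lambda}_n$ inside $c_n^\tau$, so that the ergodic averages of the noisy, block-averaged data converge to the corresponding $\nu_0$-integrals; this is exactly the machinery already developed for Theorems \ref{l0}, \ref{r0} and \ref{w0}, after which the argument above closes.
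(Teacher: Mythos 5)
Your proposal is correct and follows essentially the same route as the paper: isolate a constrained coordinate with $(\alpha^\star)^{(\ell_1)}\neq 0$, use consistency of $\hat{\alpha}_n$ together with the convergence of the normalised negative Hessian to the positive definite limit $I^{(2,2)}=\tfrac{9}{8}J^{(2,2)}(\vartheta^\star)$ (positive definiteness extracted from (A6) via a second-order expansion of $\mathbb{Y}_1$, exactly as in the paper), and deduce that $\tfrac{1}{k_n}\mathcal{W}_{1,n}$ is bounded below by a positive constant with probability tending to one. The only (minor, and welcome) difference is that you make explicit the asymptotic non-negativity of the companion statistic before passing from $P(\mathcal{W}_{1,n}\le M)\to 0$ to $P(\mathcal{W}_{1,n}+\mathcal{W}_{2,n}\le M)\to 0$, a step the paper leaves implicit.
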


\section{Simulation study}

\subsection{1-dimensional diffusion}
We consider the diffusion process with the following SDE:
\begin{align*}
	\dop X_t = \parens{\beta^{(1)}X_t+\beta^{(2)}}\dop t + \parens{\alpha^{(1)}+\frac{\alpha^{(2)}\parens{X_t}^2}{1+\parens{X_t}^2}}\dop w_t,\ X_0 = 0
\end{align*}
and the simulation setting throughout this subsection is shown in the next table;
\begin{table}[h]
	\centering
	\begin{tabular}{c|ccccccc}
		parameter & $n$ & $h_n$ & $T_n$ & $\tau$ & $p_n$ & $k_n$ & iteration\\\hline
		value & $10^6$ & $6.31\times10^{-5}$ & $63.1$ & $1.9$ & $162$ & $6172$ & $10000$
	\end{tabular}
	\caption{Simulation setting in section 4.1}
\end{table}
and that for noise is fixed as $\epsilon_{ih_n}\overset{\mathrm{i.i.d.}}{\sim}N\parens{0, 1}$, and $\Lambda_{\star}=10^{-3}$ whose size is so large that test for noise detection in \citep{NU17} detect with high probability.

\subsubsection{Test for diffusion parameters}
We propose the following test:
\begin{align*}
H_0:&\text{ }\alpha^{(2)}=0,\\
H_1:&\text{ not }H_0.
\end{align*}
When $H_0$ holds, we can interpret that an Ornstein-Uhlenbeck processes describe data enough and our full model is no more useful. 
To the contrary, rejection of $H_0$ indicates that OU processes are not enough to fit the data and our full model with diffusion coefficient dependent on state, which is difficult for traditional time series model to express, is more appropriate to express the observation.

We do the two simulations with different true value of parameters: one is with true value
\begin{align*}
	\alpha^{\star}=\crotchet{1,0},\ \beta^{\star}=\crotchet{-1,1},
\end{align*}
where $H_0$ holds; and the other is with
\begin{align*}
	\alpha^{\star}=\crotchet{1,1},\ \beta^{\star}=\crotchet{-1,1},
\end{align*}
where $H_1$ is true. Then the test statistic $\mathcal{T}_{1,n}$ behaves as shown in table \ref{table_411}. Note that $\chi_{r}^2(p)$ indicates $p$ is the upper $p$-point of $\chi^2$ distribution with degree of freedom $r$.
The figure \ref{edf_411_H0} depicts the empirical distribution function of test statistics and the theoretical one of $\chi_{1}^2$.
We can see that our likelihood-ratio-type statistic has asymptotic distribution as we have shown from these results, at least with respect to diffusion parameters. Hence we can conclude that our test has consistency in this simulation.

\begin{table}[h]
	\centering
	\begin{tabular}{c|cccc}
		& \multicolumn{4}{c}{empirical ratio of $\mathcal{T}_{1,n}$ larger than...}\\
		& $\chi_{1}^2(0.10)$ & $\chi_{1}^2(0.05)$ & $\chi_{1}^2(0.01)$ & $\chi_{1}^2(0.001)$ \\\hline
		$H_0$ is true: & 0.0987 & 0.0516 & 0.0099 & 0.0015\\
		$H_1$ is true: & $1$ & $1$ & $1$ & $1$
	\end{tabular}
	\caption{Simulation result under $H_0$ and $H_1$ in section 4.1.1}\label{table_411}
\end{table}

\subsubsection{Test for drift parameters}
We also consider the parametric test for drift parameters: let us consider the next hypotheses and the statistical test:
\begin{align*}
H_0:&\text{ }\beta^{(2)}=0,\\
H_1:&\text{ not }H_0.
\end{align*}
and again we see the behaviour of likelihood-ratio-type statistic $\mathcal{T}_{2,n}$. 
As seen, the setting questions whether our model is symmetric with respect to $0$ and it can be of interest 
when state with $0$ value is interpreted as the neutral one (e.g., wind velocity).
In the first place, we experiment our statistic with the true value
\begin{align*}
\alpha^{\star}=\crotchet{1,1},\ \beta^{\star}=\crotchet{-1,0},
\end{align*}
where $H_0$ is true; in the second place, we do with
\begin{align*}
\alpha^{\star}=\crotchet{1,1},\ \beta^{\star}=\crotchet{-1,1},
\end{align*}
where $H_1$ holds. 
The simulation result is summarised in the table \ref{table_412}. The empirical distribution function of $\mathcal{T}_{2,n}$ is shown in Figure \ref{edf_412_H0} combined with the theoretical distribution of $\chi_1^2$.
These results show that our test statistic $\mathcal{T}_{2,n}$ has asymptotic distribution as we have shown theoretically.
\begin{table}[h]
	\centering
	\begin{tabular}{c|cccc}
		& \multicolumn{4}{c}{empirical ratio of $\mathcal{T}_{2,n}$ larger than...}\\
		& $\chi_{1}^2(0.10)$ & $\chi_{1}^2(0.05)$ & $\chi_{1}^2(0.01)$ & $\chi_{1}^2(0.001)$ \\\hline
		$H_0$ is true: & 0.1086 & 0.0545 & 0.0113 & 0.001\\
		$H_1$ is true: & 1 &  1 & 0.9998 & 0.9881
	\end{tabular}
	\caption{Simulation result under $H_0$ and $H_1$ in section 4.1.2}\label{table_412}
\end{table}

\begin{figure}[p]
	\centering
	\includegraphics[width=\linewidth, bb = 0 0 720 480]{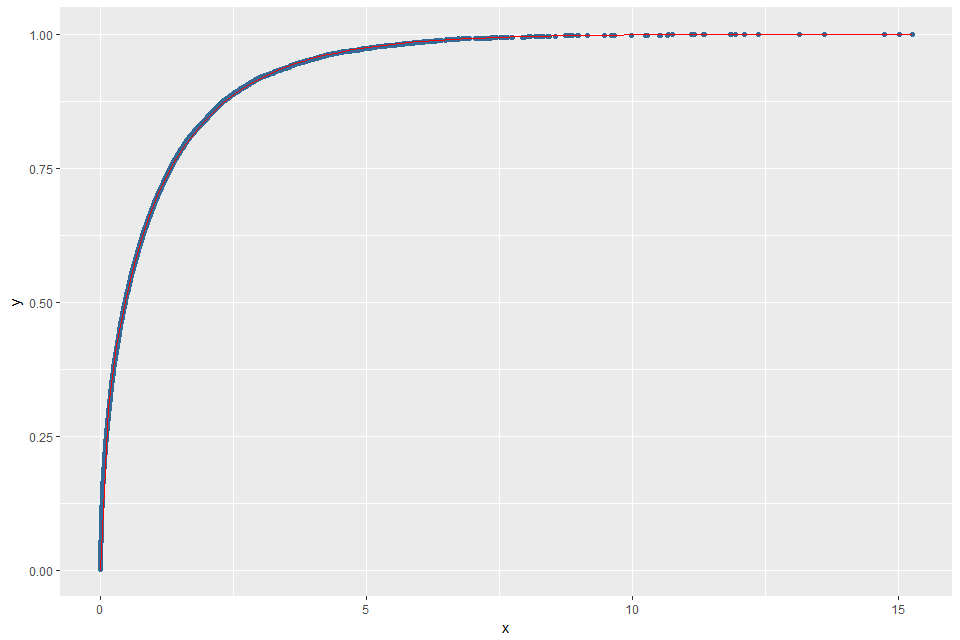}
	\caption{empirical distribution function of $\mathcal{T}_{1,n}$ (blue point) under $H_0$ and distribution function of $\chi_1^2$ (red line), section 4.1.1}\label{edf_411_H0}
	
	\includegraphics[width=\linewidth, bb = 0 0 720 480]{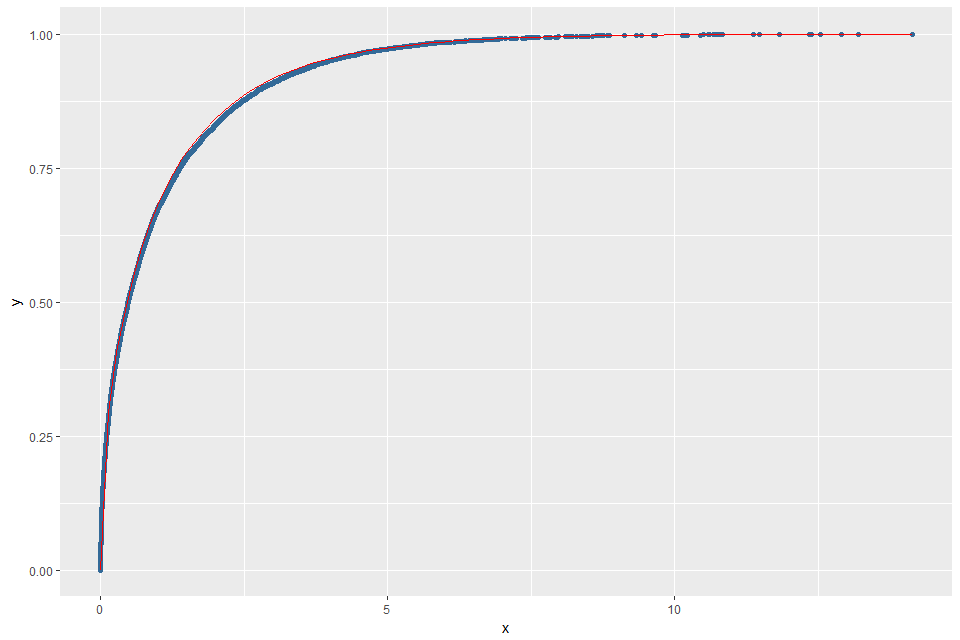}
	\caption{empirical distribution function of $\mathcal{T}_{2,n}$ (blue point) under $H_0$ and distribution function of $\chi_1^2$ (red line), section 4.1.2}\label{edf_412_H0}
\end{figure}

\subsection{2-dimensional diffusion}

We also deal with a multidimensional example of diffusion process such that
\begin{align*}
\begin{cases}
\dop\crotchet{\begin{matrix}
	X_t^{(1)}\\
	X_t^{(2)}
	\end{matrix}} = \parens{\crotchet{\begin{matrix}
		\beta^{(1)} & \beta^{(2)}\\
		\beta^{(4)} & \beta^{(5)}
		\end{matrix}}\crotchet{\begin{matrix}
		X_t^{(1)}\\
		X_t^{(2)}
		\end{matrix}}+\crotchet{\begin{matrix}
		\beta^{(3)}\\
		\beta^{(6)}
		\end{matrix}}}\dop t \\
\qquad\qquad\qquad+ \crotchet{\begin{matrix}
	\alpha^{(1)}+\frac{\alpha^{(2)}\parens{X_t^{(1)}}^2}{1+\parens{X_t^{(1)}}^2} + \frac{\alpha^{(3)}\parens{X_t^{(2)}}^2}{1+\parens{X_t^{(2)}}^2} 
	& \parens{\alpha^{(1)}\alpha^{(4)}}^{1/2}\alpha^{(7)}\\
	\parens{\alpha^{(1)}\alpha^{(4)}}^{1/2}\alpha^{(7)} & \alpha^{(4)}+\frac{\alpha^{(5)}\parens{X_t^{(1)}}^2}{1+\parens{X_t^{(1)}}^2} + \frac{\alpha^{(6)}\parens{X_t^{(2)}}^2}{1+\parens{X_t^{(2)}}^2}
	\end{matrix}}\dop w_t,
\\
\crotchet{\begin{matrix}
	X_0^{(1)}\\
	X_0^{(2)}
	\end{matrix}} = \crotchet{\begin{matrix}
	0\\
	0
	\end{matrix}}.
\end{cases}
\end{align*}

\begin{table}[h]
	\centering
	\begin{tabular}{c|ccccccc}
		parameter & $n$ & $h_n$ & $T_n$ & $\tau$ & $p_n$ & $k_n$ & iteration\\\hline
		value & $10^6$ & $6.31\times10^{-5}$ & $63.1$ & $1.9$ & $162$ & $6172$ & $2000$
	\end{tabular}
	\caption{Simulation setting in section 4.2}
\end{table}
With respect to the noise, we set $\epsilon_{ih_n}\overset{i.i.d.}{\sim}N\parens{0,I_2}$ and $\Lambda_{\star}=10^{-3}I_2$.

\subsubsection{Ornstein-Uhlenbeck test}
The hypotheses of interest in this section are
\begin{align*}
	H_0:&\alpha^{(2)}=\alpha^{(3)}=\alpha^{(5)}=\alpha^{(6)}=0,\\
	H_1:&\text{ not }H_0.
\end{align*}
This set of the hypotheses is for seeing whether the latent process $X$ is an Ornstein-Uhlenbeck process or not. If $H_0$ is rejected, it indicates that the 'innovation' in traditional time series analysis is dependent on the state $X$ and it is the situation where statistics for diffusion process prepares stronger tools for analysis. Hence the test with these hypothesis can test to use high-frequency observation framework and diffusion modelling for the target latent process.

We implement the two sorts of simulation: the first one is with the true parameter
\begin{align*}
	\alpha^{\star}=\crotchet{4, 0, 0, 4, 0, 0, -0.2},\ \beta^{\star}=\crotchet{-1, -0.1, 0, -0.1, -1, 0}
\end{align*}
where $H_0$ holds; and the second one is with
\begin{align*}
	\alpha^{\star}=\crotchet{4, 1, 1, 4, 1, 1, -0.2},\ \beta^{\star}=\crotchet{-1, -0.1, 1, -0.1, -1, 1},
\end{align*}
where $H_1$ is true. The summary of empirical ratio of $\mathcal{T}_{1,n}$ exceeding some critical values is shown in table \ref{table_421} 
and the plot of empirical distribution is drawn in figure \ref{edf_421_H0} with theoretical one. 

\begin{table}[h]
	\centering
	\begin{tabular}{c|cccc}
		& \multicolumn{4}{c}{empirical ratio of $\mathcal{T}_{2,n}$ larger than...}\\
		& $\chi_{4}^2(0.10)$ & $\chi_{4}^2(0.05)$ & $\chi_{4}^2(0.01)$ & $\chi_{4}^2(0.001)$ \\\hline
		$H_0$ is true: & 0.1085 & 0.0525 & 0.01&  0.001\\
		$H_1$ is true: & 1 & 1 & 1 & 1
	\end{tabular}
	\caption{Simulation result under $H_0$ and $H_1$ in section 4.2.1}\label{table_421}
\end{table}

\begin{figure}[h]
	\centering
	\includegraphics[width=\linewidth, bb = 0 0 720 480]{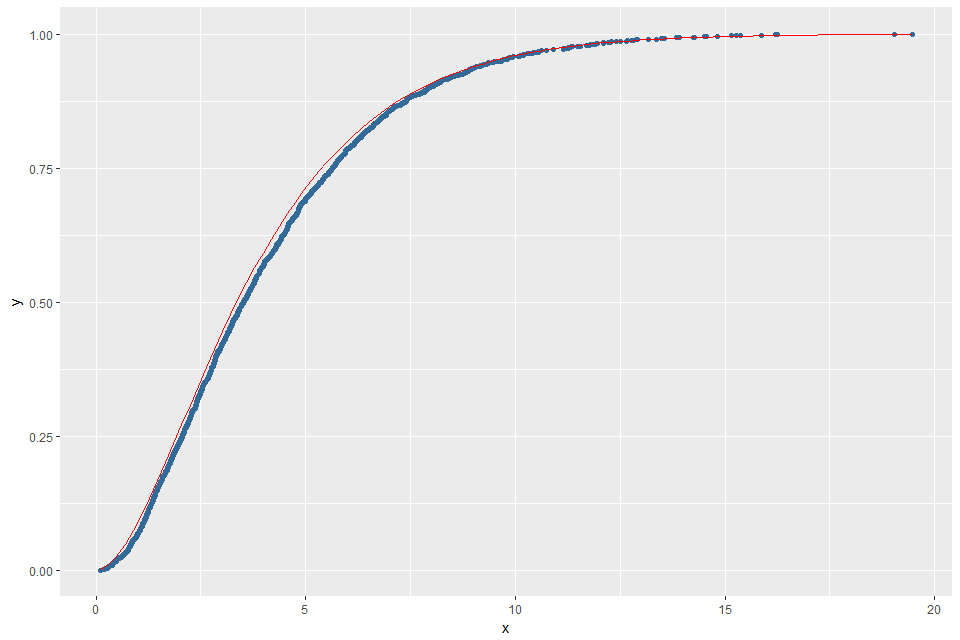}
	\caption{empirical distribution function of $\mathcal{T}_{2,n}$ (blue point) under $H_0$ and distribution function of $\chi_2^2$ (red line), section 4.2.1}\label{edf_421_H0}
\end{figure}

\subsubsection{Centricity test}

We consider the following hypotheses:
\begin{align*}
H_0:&\text{ }\beta^{(3)}=\beta^{(6)}=0,\\
H_1:&\text{ not }H_0.
\end{align*} This set of the hypotheses is a multivariate version of that discussed in section 4.1.2.

Firstly We set the true parameter
\begin{align*}
\alpha^{\star}=\crotchet{1, 1, 1, 1, 1, 1, 0.1},\ \beta^{\star}=\crotchet{-1, -0.1, 0, -0.1, -1, 0}
\end{align*}
where $H_0$ holds.
In the second place, we consider the simulation with the true value
\begin{align*}
\alpha^{\star}=\crotchet{1, 1, 1, 1, 1, 1, 0.1},\ \beta^{\star}=\crotchet{-1, -0.1, 1, -0.1, -1, 1},
\end{align*}
where $H_1$ is true. The result is summarised in table \ref{table_422} 
and the empirical distribution is plotted in figure \ref{edf_422_H0} with theoretical one. 

\begin{table}[h]
	\centering
	\begin{tabular}{c|cccc}
		& \multicolumn{4}{c}{empirical ratio of $\mathcal{T}_{2,n}$ larger than...}\\
		& $\chi_{2}^2(0.10)$ & $\chi_{2}^2(0.05)$ & $\chi_{2}^2(0.01)$ & $\chi_{2}^2(0.001)$ \\\hline
		$H_0$ is true: & 0.1105 &  0.0505 & 0.015 &  0.002\\
		$H_1$ is true: & 1 & 0.9995 & 0.999 & 0.979
	\end{tabular}
	\caption{Simulation result under $H_0$ and $H_1$ in section 4.2.2}\label{table_422}
\end{table}

\begin{figure}[p]
	\centering
	\includegraphics[width=\linewidth, bb = 0 0 720 480]{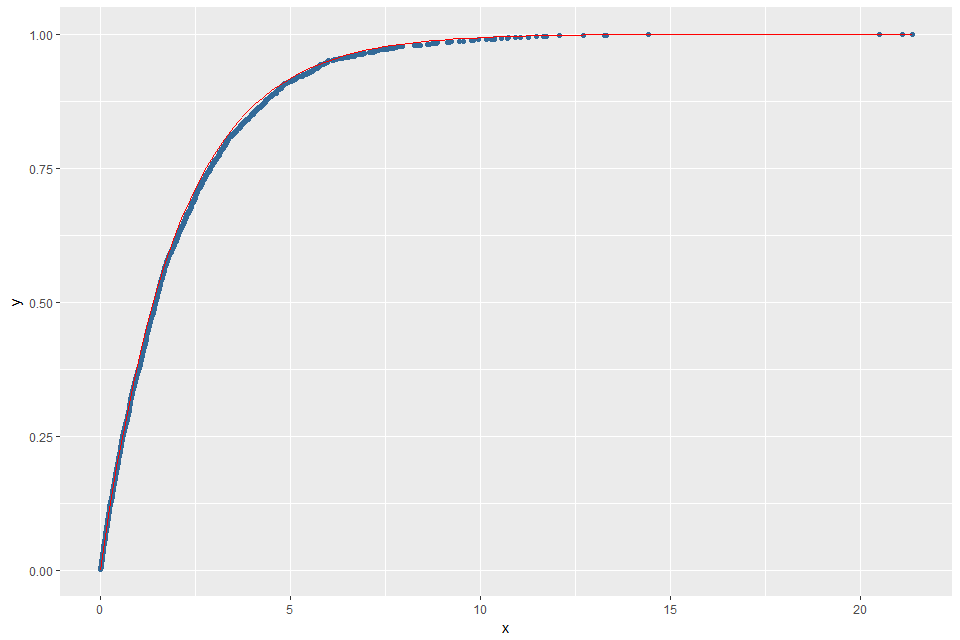}
	\caption{empirical distribution function of $\mathcal{T}_{2,n}$ (blue point) under $H_0$ and distribution function of $\chi_2^2$ (red line), section 4.2.2}\label{edf_422_H0}
	\includegraphics[width=\linewidth, bb = 0 0 720 480]{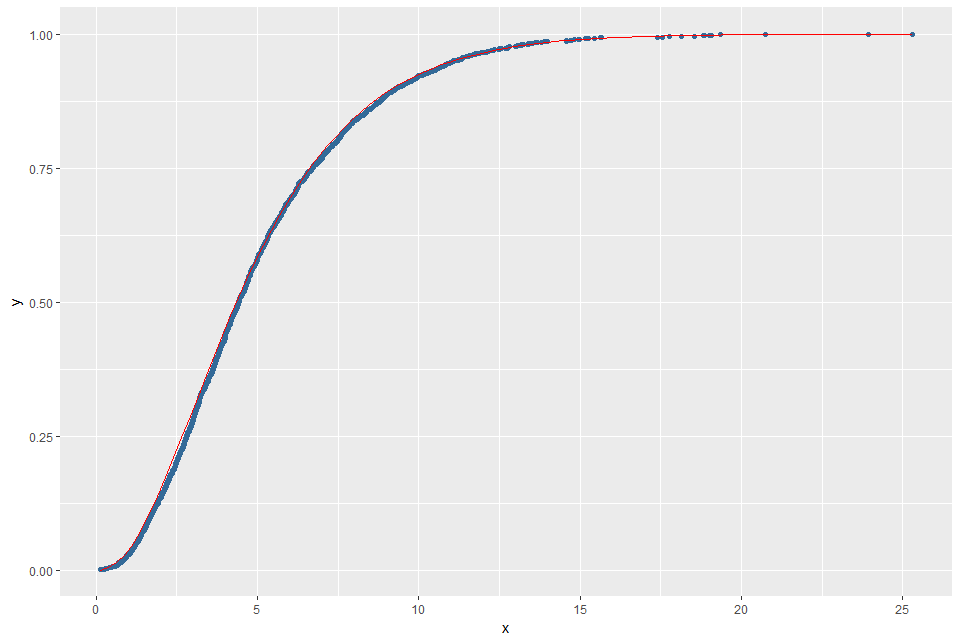}
	\caption{empirical distribution function of $\mathcal{T}_{1,n}+\mathcal{T}_{2,n}$ (blue point) under $H_0$ and distribution function of $\chi_5^2$ (red line), section 4.2.3}\label{edf_423_H0}
\end{figure}

\subsubsection{Independence test}

We consider the following hypotheses:
\begin{align*}
H_0:&\text{ }\alpha^{(3)}=\alpha^{(5)}=\alpha^{(7)}=\beta^{(2)}=\beta^{(4)}=0,\\
H_1:&\text{ not }H_0.
\end{align*}

Firstly we set the true parameter
\begin{align*}
\alpha^{\star}=\crotchet{4, 1, 0, 4, 0, 1, 0},\ \beta^{\star}=\crotchet{-1, 0, 1, 0, -1, 1},
\end{align*}
where $H_0$ holds.
In the second place, we consider the simulation with the true value
\begin{align*}
\alpha^{\star}=\crotchet{4, 1, 1, 4, 1, 1, -0.2},\ \beta^{\star}=\crotchet{-1, -0.1, 1, -0.1, -1, 1},
\end{align*}
where $H_1$ is true. The results are shown in table \ref{table_423} and figure \ref{edf_423_H0}.

\begin{table}[h]
	\centering
	\begin{tabular}{c|cccc}
		& \multicolumn{4}{c}{empirical ratio of $\mathcal{T}_{1,n}+\mathcal{T}_{2,n}$ larger than...}\\
		& $\chi_{5}^2(0.10)$  & $\chi_{5}^2(0.05)$ & $\chi_{5}^2(0.01)$ & $\chi_{5}^2(0.001)$ \\\hline
		$H_0$ is true: & 0.1045 & 0.0505 & 0.0095 & 0.0015\\
		$H_1$ is true: & $1$ & $1$ & $1$ & $1$ 
	\end{tabular}
	\caption{Simulation result under $H_0$ and $H_1$ in section 4.2.3}\label{table_423}
\end{table}

\section{Real data analysis}
As \citep{NU17}, we do the statistical analysis for wind data named MetData provided by National Wind Technology Center in US \citep{NWTC}. 
We focus on the 2-dimensional data with 0.05-second resolution representing wind velocity labelled Sonic x and Sonic y (119M) at the M5 tower, and analyse two dataset with different observation term: the first one is from 00:00:00 on 1st July, 2017 to 20:00:00 on 5th July, 2017 as \citep{NU17}; 
and the second one is from 00:00:00 on 1st April, 2016 to 00:00:00 on 21th April, 2016. 
With respect to the time unit, we set 2 hours for both datasets.

Our full model for both data is as follows:
\begin{align*}
	\begin{cases}
		\dop\crotchet{\begin{matrix}
				X_t^{(1)}\\
				X_t^{(2)}
		\end{matrix}} = \parens{\crotchet{\begin{matrix}
					\beta^{(1)} & \beta^{(2)}\\
					\beta^{(4)} & \beta^{(5)}
			\end{matrix}}\crotchet{\begin{matrix}
					X_t^{(1)}\\
					X_t^{(2)}
			\end{matrix}}+\crotchet{\begin{matrix}
					\beta^{(3)}\\
					\beta^{(6)}
		\end{matrix}}}\dop t \\
		\qquad\qquad\qquad+ \crotchet{\begin{matrix}
				\alpha^{(1)}+\frac{\alpha^{(2)}\parens{X_t^{(1)}}^2}{1+\parens{X_t^{(1)}}^2} + \frac{\alpha^{(3)}\parens{X_t^{(2)}}^2}{1+\parens{X_t^{(2)}}^2} 
				& \parens{\alpha^{(1)}\alpha^{(4)}}^{1/2}\alpha^{(7)}\\
				\parens{\alpha^{(1)}\alpha^{(4)}}^{1/2}\alpha^{(7)} & \alpha^{(4)}+\frac{\alpha^{(5)}\parens{X_t^{(1)}}^2}{1+\parens{X_t^{(1)}}^2} + \frac{\alpha^{(6)}\parens{X_t^{(2)}}^2}{1+\parens{X_t^{(2)}}^2}
		\end{matrix}}\dop w_t,
		\\
		\crotchet{\begin{matrix}
				X_0^{(1)}\\
				X_0^{(2)}
		\end{matrix}} = \crotchet{\begin{matrix}
		x_0^{(1)}\\
		x_0^{(2)}
		\end{matrix}}
	\end{cases}
\end{align*}

The some settings such that $n$, $h_n$ and $\tau$ are shown in the following table.
\begin{table}[h]
	\centering
	\begin{tabular}{c|cccccc}
	 & $n$ & $h_n$ & $T_n$ & $\tau$ & $p_n$ & $k_n$\\\hline
		July, 2017 & $8352000$ & $6.94\times10^{-6}$ & $58$ & $1.9$ & $518$ & $16123$ \\
		April, 2016 & $34560000$ & $6.94\times10^{-6}$ & $240$ & $1.9$ & $518$ & $66718$
	\end{tabular}
	\caption{Simulation setting in section 4.2}
\end{table}

\subsection{Data analysis for MetData in July, 2017}
The fitting of this full model with the local mean method is
\begin{align*}
	\begin{cases}
	\dop\crotchet{\begin{matrix}
		X_t^{(1)}\\
		X_t^{(2)}
		\end{matrix}} = \parens{\crotchet{\begin{matrix}
			-2.59 & -0.758\\
			-0.280 & -3.12
			\end{matrix}}\crotchet{\begin{matrix}
			X_t^{(1)}\\
			X_t^{(2)}
			\end{matrix}}+\crotchet{\begin{matrix}
			-0.625\\
			-0.763
			\end{matrix}}}\dop t \\
	\qquad\qquad+ \crotchet{\begin{matrix}
		3.13+9.07\frac{\parens{X_t^{(1)}}^2}{1+\parens{X_t^{(1)}}^2} + 4.10\frac{\parens{X_t^{(2)}}^2}{1+\parens{X_t^{(2)}}^2} 
		& \parens{3.13}^{1/2}\parens{3.17}^{1/2}\parens{-0.0763}\\
		\parens{3.13}^{1/2}\parens{3.17}^{1/2}\parens{-0.0763} & 3.17+8.54\frac{\parens{X_t^{(1)}}^2}{1+\parens{X_t^{(1)}}^2} + 3.59\frac{\parens{X_t^{(2)}}^2}{1+\parens{X_t^{(2)}}^2}
		\end{matrix}}\dop w_t,
	\\
	\crotchet{\begin{matrix}
		X_0^{(1)}\\
		X_0^{(2)}
		\end{matrix}} = \crotchet{\begin{matrix}
		x_0^{(1)}\\
		x_0^{(2)}
		\end{matrix}}
	\end{cases}
\end{align*}
We already have the result that the dataset is contaminated by noise with significance level $\alpha\ge10^{-16}$ (see \citep{NU17}). Hence it is reasonable to adopt the parameter estimation using local mean methods. 

Firstly, we do the statistical test for the set of the hypotheses
\begin{align*}
	H_0:&\alpha^{(2)}=\alpha^{(3)}=\alpha^{(5)}=\alpha^{(6)}=0,\\
	H_1:&\text{ not }H_0.
\end{align*}
which examines that the wind velocity can be expressed by Ornstein-Uhlenbeck process sufficiently. In the second place, we examine
\begin{align*}
	H_0:&\text{ }\beta^{(3)}=\beta^{(6)}=0,\\
	H_1:&\text{ not }H_0.
\end{align*}
the test for centrality discussed in Section 4.1.2 and 4.2.1. Finally, we consider the test for independence, that is,
\begin{align*}
	H_0:&\text{ }\alpha^{(3)}=\alpha^{(5)}=\alpha^{(7)}=\beta^{(2)}=\beta^{(4)}=0,\\
	H_1:&\text{ not }H_0.
\end{align*}
which is the topic in Section 4.2.2.

The results of the likelihood-ratio-type tests are shown in table \ref{table5}. We can conclude that the wind velocity cannot be fitted by OU process enough compared to our full model with common significance level.
With respect to the centricity, we cannot reject $H_0$ even with the significance level $0.10$ and hence there is no validity to regard the wind velocity is symmetric with respect to the zero vector.
What is more, the result of the test for independence indicates that it is meaningful to model velocity jointly with commonly used significance level.

\begin{table}[h]
	\centering
	\begin{tabular}{c|r|l}
		test & \multicolumn{1}{c}{test statistic} & \multicolumn{1}{|c}{$p$-value}\\\hline
		OU & $\mathcal{T}_{1,n}=6603.819$ & $p<10^{-16}$\\
		centricity & $\mathcal{T}_{2,n}=0.745618$ & $p=0.3112$\\
		independence & $\mathcal{T}_{1,n}+\mathcal{T}_{2,n}=3395.082$ & $p<10^{-16}$
	\end{tabular}
	\caption{Summary of the tests in Section 5}\label{table5}
\end{table}

\subsection{Data analysis for MetData in April, 2016}
The fitting for the second dataset results in
\begin{align*}
	\begin{cases}
	\dop\crotchet{\begin{matrix}
		X_t^{(1)}\\
		X_t^{(2)}
		\end{matrix}} = \parens{\crotchet{\begin{matrix}
			-2.40 & -0.657\\
			-0.677 & -3.84
			\end{matrix}}\crotchet{\begin{matrix}
			X_t^{(1)}\\
			X_t^{(2)}
			\end{matrix}}+\crotchet{\begin{matrix}
			4.57\\
			1.97
			\end{matrix}}}\dop t \\
	\qquad\qquad+ \crotchet{\begin{matrix}
		6.18+12.28\frac{\parens{X_t^{(1)}}^2}{1+\parens{X_t^{(1)}}^2} + 0.78\frac{\parens{X_t^{(2)}}^2}{1+\parens{X_t^{(2)}}^2} 
		& \parens{6.18}^{1/2}\parens{5.81}^{1/2}\parens{-0.0665}\\
		\parens{6.18}^{1/2}\parens{5.81}^{1/2}\parens{-0.0665} & 5.81+10.63\frac{\parens{X_t^{(1)}}^2}{1+\parens{X_t^{(1)}}^2} + 0.98\frac{\parens{X_t^{(2)}}^2}{1+\parens{X_t^{(2)}}^2}
		\end{matrix}}\dop w_t,
	\\
	\crotchet{\begin{matrix}
		X_0^{(1)}\\
		X_0^{(2)}
		\end{matrix}} = \crotchet{\begin{matrix}
		x_0^{(1)}\\
		x_0^{(2)}
		\end{matrix}}.
	\end{cases}
\end{align*}

Firstly, we check the existence of noise in observation. The $z$-value of test for noise detection in \citep{NU17} is $1244.375$ and it is so large value that we can reject the null hypothesis stating $\Lambda=O$ with ordinary significance level. Hence we are motivated to use local mean method for fitting rather than the local Gaussian approximation as \citep{K97}.

The results of hypothesis testing are identical to those in the previous section except for centricity. 
The random perturbation is dependent on the state $X$ and it leads to the motivation for SDE modelling. 
Moreover, both of the processes with respect to x-axis and y-axis are dependent to each other; 
therefore, it is necessary to model this phenomenon with a $2$-dimensional diffusion process. 
In comparison to the data in July 2017, this dataset is characterised with its non-centricity. We can observe the constant tendency of wind velocity throughout the observed term.
\begin{table}[h]
	\centering
	\begin{tabular}{c|r|l}
		test & \multicolumn{1}{c}{test statistic} & \multicolumn{1}{|c}{$p$-value}\\\hline
		OU & $\mathcal{T}_{1,n}=20154.56$ & $p<10^{-16}$\\
		centricity & $\mathcal{T}_{2,n}=28.9719$ & $p=5.11\times10^{-7}$\\
		independence & $\mathcal{T}_{1,n}+\mathcal{T}_{2,n}=3395.082$ & $p<10^{-16}$
	\end{tabular}
	\caption{Summary of the tests in Section 5}\label{table5}
\end{table}

\section{Conclusion}
We suggested some types of test statistics for parametric hypotheses in the use of some results for quasi-likelihood proposed in \citep{NU17}. 
In section for simulation and real data analysis, we examined the asymptotics of those statistics with practical hypotheses settings such that test for Ornstein-Uhlenbeck processes which can check the motivation to use diffusion modelling rather than classical time series modelling, and independence test which enables us to see whether we should model observed phenomena with multi-dimensional settings. 
In addition, centricity test which corresponds to the classical i.i.d. setting was used to see the process is centred along with zero vector or not.
With these tools for statistical analysis, we will obtain statistically-supported conclusion from high-frequency data even with the existence of observation noise.

\section{Proof}
In the following discussion, we denote 
\begin{align*}
I^{(2,2)}(\vartheta^{\star})&=\frac{9}{8}J^{(2,2)}(\vartheta^{\star})\\
I^{(3,3)}(\vartheta^{\star})&=J^{(3,3)}(\vartheta^{\star}).
\end{align*}

\begin{proof}[Proof of Theorem \ref{l0}]
	We only consider the asymptotics of $\mathbb{L}_{1,n}$ with $r_1>0$ since the case of $\mathbb{L}_{2,n}$ with $r_2>0$ is quite analogous. For Taylor's theorem, we obtain
	\begin{align*}
	\mathbb{L}_{1,n}\parens{\tilde{\alpha}_n|\hat{\Lambda}_n}&=\mathbb{L}_{1,n}\parens{\hat{\alpha}_n|\hat{\Lambda}_n}+\partial_{\alpha}\mathbb{L}_{1,n}\parens{\hat{\alpha}_n|\hat{\Lambda}_n}\parens{\tilde{\alpha}_n-\hat{\alpha}_n}\\
	&\qquad+\ip{\parens{\int_{0}^{1}\frac{(1-u)}{k_n}\partial_{\alpha}^2\mathbb{L}_{1,n}\parens{\hat{\alpha}_n+u\parens{\tilde{\alpha}_n-\hat{\alpha}_n}|\hat{\Lambda}_n}\dop u}}{\crotchet{\sqrt{k_n}\parens{\tilde{\alpha}_n-\hat{\alpha}_n}}^{\otimes 2}}
	\end{align*}
	We use the notation
	\begin{align*}
	\tilde{J}_{i,n}^{(2,2)}\parens{\tilde{\alpha}_n,\hat{\alpha}_n}:=-2\int_{0}^{1}\frac{(1-u)}{k_n}\partial_{\alpha}^2\mathbb{L}_{1,n}\parens{\hat{\alpha}_n+u\parens{\tilde{\alpha}_n-\hat{\alpha}_n}|\hat{\Lambda}_n}\dop u,
	\end{align*}
	and then under $H_0$, the consistency of $\hat{\alpha}_n$ and $\tilde{\alpha}_n$, and the discussion in the proof of Theorem 3.1.3 in \citep{NU17} lead to
	\begin{align*}
	\tilde{J}_{i,n}^{(2,2)}\parens{\tilde{\alpha}_n,\hat{\alpha}_n} \cp J^{(2,2)}(\vartheta^{\star}).
	\end{align*}
	We can evaluate $\partial_{\alpha}\mathbb{L}_{1,n}\parens{\hat{\alpha}_n|\hat{\Lambda}_n}=\mathbf{0}^T$ and hence
	\begin{align*}
	\mathcal{T}_{1,n}&=\frac{8}{9}\ip{\tilde{J}_{i,n}^{(2,2)}\parens{\tilde{\alpha}_n,\hat{\alpha}_n}}{\crotchet{\sqrt{k_n}\parens{\tilde{\alpha}_n-\hat{\alpha}_n}}^{\otimes 2}}.
	\end{align*}
	Then the result for the simplest case where $r_1=m_1$, i.e., $\alpha^{\star}=\mathbf{0}$, can be led since we have $\sqrt{k_n}\parens{\hat{\alpha}_n-\alpha^{\star}}\cl 
	\parens{J^{(2,2)}}^{-1} \parens{I^{(2,2)}}^{1/2}(\vartheta^{\star})Z$,
	where $Z\sim N(0,I_{m_1})$ because of Theorem 3.1.3 in \citep{NU17} and then
	\begin{align*}
	&\frac{8}{9}\ip{\tilde{J}_{i,n}^{(2,2)}\parens{\tilde{\alpha}_n,\hat{\alpha}_n}}{\crotchet{\sqrt{k_n}\parens{\tilde{\alpha}_n-\hat{\alpha}_n}}^{\otimes 2}}\\
	&\cl \frac{8}{9}\ip{\parens{J^{(2,2)}(\vartheta^{\star})} }{
		\crotchet{\parens{J^{(2,2)}}^{-1} \parens{I^{(2,2)}}^{1/2}(\vartheta^{\star})Z}^{\otimes 2}}\\
	&= \frac{8}{9}Z^T\crotchet{\parens{I^{(2,2)}}^{1/2}\parens{J^{(2,2)}}^{-1} \parens{I^{(2,2)}}^{1/2}(\vartheta^{\star})}Z\\
	&= \frac{8}{9}Z^T\crotchet{\parens{I^{(2,2)}}^{1/2}\parens{\frac{8}{9}I^{(2,2)}}^{-1} \parens{I^{(2,2)}}^{1/2}(\vartheta^{\star})}Z\\
	&\sim \chi_{m_1}^2.
	\end{align*}
	In general, it is necessary to examine the asymptotic behaviour of $\sqrt{k_n}\parens{\tilde{\alpha}_n-\hat{\alpha}_n}$. Let us consider the expansion
	\begin{align*}
	\frac{1}{\sqrt{k_n}}\parens{\partial_{\alpha}\mathbb{L}_{1,n}}^T\parens{\tilde{\alpha}_n|\hat{\Lambda}_n}&=\frac{1}{\sqrt{k_n}}\parens{\partial_{\alpha}\mathbb{L}_{1,n}}^T\parens{\hat{\alpha}_n|\hat{\Lambda}_n}\\
	&\qquad+\parens{\int_{0}^{1}\frac{1}{k_n}\partial_{\alpha}^2\mathbb{L}_{1,n}\parens{\hat{\alpha}_n+u\parens{\tilde{\alpha}_n-\hat{\alpha}_n}|\hat{\Lambda}_n}\dop u}\sqrt{k_n}\parens{\tilde{\alpha}_n-\hat{\alpha}_n}\\
	&=-\tilde{J}_{ii,n}^{(2,2)}\parens{\tilde{\alpha}_n,\hat{\alpha}_n}\sqrt{k_n}\parens{\tilde{\alpha}_n-\hat{\alpha}_n},
	\end{align*}
	where
	\begin{align*}
	\tilde{J}_{ii,n}^{(2,2)}\parens{\tilde{\alpha}_n,\hat{\alpha}_n}:=-\int_{0}^{1}\frac{1}{k_n}\partial_{\alpha}^2\mathbb{L}_{1,n}\parens{\hat{\alpha}_n+u\parens{\tilde{\alpha}_n-\hat{\alpha}_n}|\hat{\Lambda}_n}\dop u
	\end{align*}
	with the property
	\begin{align*}
	\tilde{J}_{ii,n}^{(2,2)}\parens{\tilde{\alpha}_n,\hat{\alpha}_n} \cp J^{(2,2)}(\vartheta^{\star}).
	\end{align*}
	Hence
	\begin{align*}
	-\parens{\tilde{J}_{ii,n}^{(2,2)}\parens{\tilde{\alpha}_n,\hat{\alpha}_n}}^{-1}\frac{1}{\sqrt{k_n}}\parens{\partial_{\alpha}\mathbb{L}_{1,n}}^T\parens{\tilde{\alpha}_n|\hat{\Lambda}_n}=\sqrt{k_n}\parens{\tilde{\alpha}_n-\hat{\alpha}_n}.
	\end{align*}
	It leads to
	\begin{align*}
	\mathcal{T}_{1,n}=\frac{8}{9}\ip{\tuborg{\crotchet{\parens{\tilde{J}_{ii,n}^{(2,2)}}^{-1}\parens{\tilde{J}_{i,n}^{(2,2)}}\parens{\tilde{J}_{ii,n}^{(2,2)}}^{-1}}\parens{\tilde{\alpha}_n,\hat{\alpha}_n}}}{\crotchet{\frac{1}{\sqrt{k_n}}\parens{\partial_{\alpha}\mathbb{L}_{1,n}}^T\parens{\tilde{\alpha}_n|\hat{\Lambda}_n}}^{\otimes2}}.
	\end{align*}
	Moreover, we check the expansion
	\begin{align*}
	\frac{1}{\sqrt{k_n}}\parens{\partial_{\alpha}\mathbb{L}_{1,n}}^T\parens{\tilde{\alpha}_n|\hat{\Lambda}_n}
	&=\frac{1}{\sqrt{k_n}}\parens{\partial_{\alpha}\mathbb{L}_{1,n}}^T\parens{\alpha^{\star}|\hat{\Lambda}_n}\\
	&\qquad+\parens{\int_{0}^{1}\frac{1}{k_n}\partial_{\alpha}^2\mathbb{L}_{1,n}\parens{\alpha^{\star}+u\parens{\tilde{\alpha}_n-\alpha^{\star}}|\hat{\Lambda}_n}\dop u}\sqrt{k_n}\parens{\tilde{\alpha}_n-\alpha^{\star}}.
	\end{align*}
	Let us partition $J^{(2,2)}$ into
	\begin{align*}
	J^{(2,2)}=
	\crotchet{
		\begin{matrix}
		G_{1}^{(2,2)} & G_{2}^{(2,2)}\\
		\parens{G_{2}^{(2,2)}}^T & G_{3}^{(2,2)}
		\end{matrix}
	},
	\end{align*}
	where $G_{1}^{(2,2)}\in \Re^{r_1}\otimes \Re^{r_1}$, $G_{2}^{(2,2)}\in \Re^{r_1}\otimes \Re^{m_1-r_1}$, and $G_{3}^{(2,2)}\in \Re^{m_1-r_1}\otimes \Re^{m_1-r_1}$ and define
	\begin{align*}
	H^{(2,2)}:=\crotchet{
		\begin{matrix}
		O & O \\
		O & \parens{G_{3}^{(2,2)}}^{-1}
		\end{matrix}
	}.
	\end{align*}
	Since the last $m_1-r_1$ components of $\partial_{\alpha}\mathbb{L}_{1,n}\parens{\tilde{\alpha}_n|\hat{\Lambda}_n}$ are equal to zero for sufficiently large $n$, we obtain $H^{(2,2)}\parens{\partial_{\alpha}\mathbb{L}_{1,n}}^T\parens{\tilde{\alpha}_n|\hat{\Lambda}_n}=\mathbf{0}$ and hence
	\begin{align*}
	\mathbf{0}&=H^{(2,2)}\frac{1}{\sqrt{k_n}}\parens{\partial_{\alpha}\mathbb{L}_{1,n}}^T\parens{\alpha^{\star}|\hat{\Lambda}_n}
	\\
	&\quad+H^{(2,2)}\parens{\int_{0}^{1}\frac{1}{k_n}\partial_{\alpha}^2\mathbb{L}_{1,n}\parens{\alpha^{\star}+u\parens{\tilde{\alpha}_n-\alpha^{\star}}|\hat{\Lambda}_n}\dop u}\sqrt{k_n}\parens{\tilde{\alpha}_n-\alpha^{\star}},
	\end{align*}
	and then
	\begin{align*}
	&J^{(2,2)}H^{(2,2)}\frac{1}{\sqrt{k_n}}\parens{\partial_{\alpha}\mathbb{L}_{1,n}}^T\parens{\alpha^{\star}|\hat{\Lambda}_n}\\
	&=
	-J^{(2,2)}H^{(2,2)}\parens{\int_{0}^{1}\frac{1}{k_n}\partial_{\alpha}^2\mathbb{L}_{1,n}\parens{\alpha^{\star}+u\parens{\tilde{\alpha}_n-\alpha^{\star}}|\hat{\Lambda}_n}\dop u}\sqrt{k_n}\parens{\tilde{\alpha}_n-\alpha^{\star}}.
	\end{align*}
	Note that the first $r_1$ components of $\tilde{\alpha}_n$ and $\alpha^{\star}$ are equal to zero, and it leads to
	\begin{align*}
	&J^{(2,2)}H^{(2,2)}\parens{\int_{0}^{1}\frac{1}{k_n}\partial_{\alpha}^2\mathbb{L}_{1,n}\parens{\alpha^{\star}+u\parens{\tilde{\alpha}_n-\alpha^{\star}}|\hat{\Lambda}_n}\dop u}\sqrt{k_n}\parens{\tilde{\alpha}_n-\alpha^{\star}}\\
	&=\parens{\int_{0}^{1}\frac{1}{k_n}\partial_{\alpha}^2\mathbb{L}_{1,n}\parens{\alpha^{\star}+u\parens{\tilde{\alpha}_n-\alpha^{\star}}|\hat{\Lambda}_n}\dop u}\sqrt{k_n}\parens{\tilde{\alpha}_n-\alpha^{\star}}.
	\end{align*}
	Then we have
	\begin{align*}
	\frac{1}{\sqrt{k_n}}\parens{\partial_{\alpha}\mathbb{L}_{1,n}}^T\parens{\tilde{\alpha}_n|\hat{\Lambda}_n}
	&=\parens{I-J^{(2,2)}H^{(2,2)}}\frac{1}{\sqrt{k_n}}\parens{\partial_{\alpha}\mathbb{L}_{1,n}}^T\parens{\alpha^{\star}|\hat{\Lambda}_n}.
	\end{align*}
	Theorem 7.5.1 in \citep{NU17} leads to
	\begin{align*}
	\frac{1}{\sqrt{k_n}}\parens{\partial_{\alpha}\mathbb{L}_{1,n}}^T\parens{\alpha^{\star}|\hat{\Lambda}_n}\cl \parens{I^{(2,2)}}^{1/2}Z,
	\end{align*}
	where $Z\sim N(0,I_{m_1})$, and hence
	\begin{align*}
	\frac{1}{\sqrt{k_n}}\parens{\partial_{\alpha}\mathbb{L}_{1,n}}^T\parens{\tilde{\alpha}_n|\hat{\Lambda}_n}\cl \parens{I-J^{(2,2)}H^{(2,2)}}\parens{I^{(2,2)}}^{1/2}Z.
	\end{align*}
	Because $H^{(2,2)}J^{(2,2)}H^{(2,2)}=H^{(2,2)}$, we obtain
	\begin{align*}
	\mathcal{T}_{1,n}&\cl \frac{8}{9}Z^T\parens{I^{(2,2)}}^{1/2}\parens{I-J^{(2,2)}H^{(2,2)}}^T\parens{J^{(2,2)}}^{-1}\parens{I-J^{(2,2)}H^{(2,2)}}\parens{I^{(2,2)}}^{1/2}Z\\
	&=\frac{8}{9}Z^T\parens{I^{(2,2)}}^{1/2}\crotchet{\parens{J^{(2,2)}}^{-1}-H^{(2,2)}}\parens{I^{(2,2)}}^{1/2}Z\\
	&=Z^T\parens{J^{(2,2)}}^{1/2}\crotchet{\parens{J^{(2,2)}}^{-1}-H^{(2,2)}}\parens{J^{(2,2)}}^{1/2}Z.
	\end{align*}
	Note that
	\begin{align*}
	&\parens{J^{(2,2)}}^{1/2}\crotchet{\parens{J^{(2,2)}}^{-1}-H^{(2,2)}}\parens{J^{(2,2)}}^{1/2}\parens{J^{(2,2)}}^{1/2}\crotchet{\parens{J^{(2,2)}}^{-1}-H^{(2,2)}}\parens{J^{(2,2)}}^{1/2}\\
	&=\parens{J^{(2,2)}}^{1/2}\crotchet{\parens{J^{(2,2)}}^{-1}-H^{(2,2)}}\parens{J^{(2,2)}}^{1/2}
	\end{align*}
	and $\tr\tuborg{I-\parens{J^{(2,2)}}H^{(2,2)}}=r_1$. Hence we obtain
	\begin{align*}
	\mathcal{T}_{1,n}\cl \chi_{r_1}^2
	\end{align*}
	(see \citep{Fe96}). If $r_{2}=0$, it completes the proof. Otherwise, with the identical discussion, we obtain
	\begin{align*}
		\mathcal{T}_{1,n}+\mathcal{T}_{2,n}
		&=\frac{8}{9}\ip{\parens{J^{(2,2)}\parens{\vartheta^{\star}}}^{-1}}{
			\crotchet{\frac{1}{\sqrt{k_n}}\parens{\partial_{\alpha}\mathbb{L}_{1,n}}^T\parens{\tilde{\alpha}_n|\hat{\Lambda}_n}}^{\otimes2}}\\
		&\quad + \ip{\parens{J^{(3,3)}\parens{\vartheta^{\star}}}^{-1}}{
			\crotchet{
				\frac{1}{\sqrt{T_n}}\parens{\partial_{\beta}\mathbb{L}_{2,n}}^T\parens{\tilde{\beta}_n|\hat{\Lambda}_n,\hat{\alpha}_n}}^{\otimes2}}+o_{P}\parens{1}\\
		&=\frac{8}{9}\ip{\parens{J^{(2,2)}\parens{\vartheta^{\star}}}^{-1}}{
			\crotchet{\parens{I-J^{(2,2)}H^{(2,2)}}\frac{1}{\sqrt{k_n}}\parens{\partial_{\alpha}\mathbb{L}_{1,n}}^T\parens{\alpha^{\star}|\hat{\Lambda}_n}}^{\otimes2}}\\
		&\quad + \ip{\parens{J^{(3,3)}\parens{\vartheta^{\star}}}^{-1}}{
			\crotchet{
				\parens{I-J^{(3,3)}H^{(3,3)}}\frac{1}{\sqrt{T_n}}\parens{\partial_{\beta}
					\mathbb{L}_{2,n}}^T\parens{\beta^{\star}|\hat{\Lambda}_n,\hat{\alpha}_{n}}}^{\otimes2}}
				+o_{P}\parens{1},
	\end{align*}
	where
	\begin{align*}
			J^{(3,3)}=
		\crotchet{
			\begin{matrix}
			G_{1}^{(3,3)} & G_{2}^{(3,3)}\\
			\parens{G_{2}^{(3,3)}}^T & G_{3}^{(3,3)}
			\end{matrix}
		},
	\end{align*}
	$G_{1}^{(3,3)}\in \Re^{r_2}\otimes \Re^{r_2}$, $G_{2}^{(3,3)}\in \Re^{r_2}\otimes \Re^{m_2-r_2}$, $G_{3}^{(3,3)}\in \Re^{m_2-r_2}\otimes \Re^{m_2-r_2}$, and 
	\begin{align*}
	H^{(3,3)}:=\crotchet{
		\begin{matrix}
		O & O \\
		O & \parens{G_{3}^{(3,3)}}^{-1}
		\end{matrix}
	}.
	\end{align*}
	Therefore, the convergence in law
	\begin{align*}
		\crotchet{\begin{matrix}
			\frac{1}{\sqrt{k_n}}
			\parens{\partial_{\alpha}\mathbb{L}_{1,n}}^T\parens{\alpha^{\star}|\hat{\Lambda}_n}\\
			\frac{1}{\sqrt{T_n}}\parens{\partial_{\beta}\mathbb{L}_{2,n}}^T\parens{\beta^{\star}|\hat{\Lambda}_n,\hat{\alpha}_n}
		\end{matrix}}\cl N\parens{\mathbf{0},\crotchet{\begin{matrix}
			I^{(2,2)} & O\\
			O & I^{(3,3)}
			\end{matrix}}\parens{\vartheta^{\star}}}
	\end{align*}
	(see \citep{NU17}) and continuous mapping theorem lead to
	\begin{align*}
	\mathcal{T}_{1,n}+\mathcal{T}_{2,n}\cl \chi_{r_1+r_2}^2.
	\end{align*}
	Hence we obtain the result.
\end{proof}

\begin{proof}[Proof of Theorem \ref{l1}]
	Firstly we consider the case where $r_1>0$ and $^\exists \ell_1\in\tuborg{1,\cdots,r_1}$ such that $\parens{\alpha^{\star}}^{(\ell_1)}\neq 0$. Note that
	\begin{align*}
	\sup_{\alpha\in\Theta_1}\abs{\frac{1}{k_n}\parens{\mathbb{L}_{1,n}\parens{\hat{\alpha}_n|\hat{\Lambda}_n}
			-\mathbb{L}_{1,n}\parens{\alpha|\hat{\Lambda}_n}}+\mathbb{Y}_1(\alpha)}\cp 0,
	\end{align*}
	because of \citep{NU17}, and (A6) leads to for all $n$,
	\begin{align*}
	\frac{9}{16k_n}\mathcal{T}_{1,n}+\abs{\frac{9}{16k_n}\mathcal{T}_{1,n}+\mathbb{Y}(\tilde{\alpha}_n)}
	\ge -\mathbb{Y}(\tilde{\alpha}_n)
	\ge \chi \norm{\tilde{\alpha}_n-\alpha^{\star}}^2
	\ge \chi \abs{\parens{\alpha^{\star}}^{(\ell_1)}}^{2} > 0.
	\end{align*}
	since $\mathbb{Y}(\alpha)\le 0$ for all $\alpha$.
	Therefore, for all $M>0$, 
	\begin{align*}
	P\parens{\mathcal{T}_{1,n}\le M} 
	&=P\parens{\frac{9}{16k_n}\mathcal{T}_{1,n}+\abs{\frac{9}{16k_n}\mathcal{T}_{1,n}+\mathbb{Y}(\tilde{\alpha}_n)}\le \frac{9M}{16k_n}+\abs{\frac{9}{16k_n}\mathcal{T}_{1,n}+\mathbb{Y}(\tilde{\alpha}_n)}}\\
	&\le   P\parens{\chi \abs{\parens{\alpha^{\star}}^{(\ell_1)}}^{2}
		\le \frac{9M}{16k_n}+\abs{\frac{9}{16k_n}\mathcal{T}_{1,n}+\mathbb{Y}(\tilde{\alpha}_n)}}\\
	&\le P\parens{\chi \abs{\parens{\alpha^{\star}}^{(\ell_1)}}^{2}-\frac{9M}{16k_n}
		\le \sup_{\alpha\in\Theta_1}\abs{\frac{1}{k_n}\parens{\mathbb{L}_{1,n}\parens{\hat{\alpha}_n|\hat{\Lambda}_n}
				-\mathbb{L}_{1,n}\parens{\alpha|\hat{\Lambda}_n}}+\mathbb{Y}_1(\alpha)}},
	\end{align*}
	and since for any $M$ there exists sufficiently large $n$ such that $\chi \abs{\parens{\alpha^{\star}}^{(\ell_1)}}^{2}-\frac{9M}{16k_n}\ge \frac{1}{2}\chi \abs{\parens{\alpha^{\star}}^{(\ell_1)}}^{2}$, we have
	\begin{align*}
		P\parens{\mathcal{T}_{1,n}\le M} &\le P\parens{\frac{1}{2}\chi \abs{\parens{\alpha^{\star}}^{(\ell_1)}}^{2}\le \sup_{\alpha\in\Theta_1}\abs{\frac{1}{k_n}\parens{\mathbb{L}_{1,n}\parens{\hat{\alpha}_n|\hat{\Lambda}_n}
					-\mathbb{L}_{1,n}\parens{\alpha|\hat{\Lambda}_n}}+\mathbb{Y}_1(\alpha)}} + o(1)\\
				&\to 0
	\end{align*}
	as $n\to\infty$ because of the uniform convergence in probability shown above.
	This result and the analogous discussion for $\mathcal{T}_{2,n}$ for the case where $r_2>0$ and $^\exists \ell_2\in\tuborg{1,\cdots,r_2}$ such that $\parens{\beta^{\star}}^{(\ell_2)}\neq 0$ complete the proof.
\end{proof}

\begin{proof}[Proof of Theorem \ref{r0}]
	We only proof for the convergence of $\mathcal{R}_{1,n}$ with $r_1>0$ and use the sets of notation same as Theorem 1. Then it holds
	\begin{align*}
	\mathcal{R}_{1,n}&=\frac{1}{\sqrt{k_n}}\partial_{\alpha}\mathbb{L}_{1,n}\parens{\alpha^{\star}|\hat{\Lambda}_n}\parens{I-J^{(2,2)}H}\parens{-\frac{9}{8k_n}\partial_{\alpha}^2\mathbb{L}_{1,n}\parens{\hat{\alpha}_n|\hat{\Lambda}_n}}^{-1}\parens{I-J^{(2,2)}H}\\
	&\qquad\times\frac{1}{\sqrt{k_n}}\parens{\partial_{\alpha}\mathbb{L}_{1,n}}^T\parens{\alpha^{\star}|\hat{\Lambda}_n}\\
	&\cl Z^T\parens{I^{(2,2)}}^{1/2}\parens{I-J^{(2,2)}H}
	\parens{\frac{9}{8}J^{(2,2)}}^{-1}\parens{I-J^{(2,2)}H}
	\parens{I^{(2,2)}}^{1/2}Z\\
	&\sim \chi_{r_1}^{2}.
	\end{align*}
\end{proof}

\begin{proof}[Proof of Theorem \ref{r1}]
	We proof for the case where $r_1>0$ and there exists $\ell_1\in\tuborg{1,\cdots,r_1}$ such that $\parens{\alpha^{\star}}^{(\ell_1)}\neq 0$. Note the Taylor's expansion
	\begin{align*}
	\frac{1}{k_n}\parens{\partial_{\alpha}\mathbb{L}_{1,n}}^T\parens{\tilde{\alpha}_n|\hat{\Lambda}_n}&=\frac{1}{k_n}\parens{\partial_{\alpha}\mathbb{L}_{1,n}}^T\parens{\hat{\alpha}_n|\hat{\Lambda}_n}\\
	&\qquad+\parens{\int_{0}^{1}\frac{1}{k_n}\partial_{\alpha}^2\mathbb{L}_{1,n}\parens{\hat{\alpha}_n+u\parens{\tilde{\alpha}_n-\hat{\alpha}_n}|\hat{\Lambda}_n}\dop u}\parens{\tilde{\alpha}_n-\hat{\alpha}_n}\\
	&=\parens{\int_{0}^{1}\frac{1}{k_n}\partial_{\alpha}^2\mathbb{L}_{1,n}\parens{\hat{\alpha}_n+u\parens{\tilde{\alpha}_n-\hat{\alpha}_n}|\hat{\Lambda}_n}\dop u}\parens{\tilde{\alpha}_n-\hat{\alpha}_n}\\
	&=\parens{\int_{0}^{1}J^{(2,2)}\parens{\hat{\alpha}_n+u\parens{\tilde{\alpha}_n-\hat{\alpha}_n}|\vartheta^{\star}}\dop u}\parens{\tilde{\alpha}_n-\hat{\alpha}_n}+o_P(1)
	\end{align*}
	for compactness of $\Theta_1$. Then 
	\begin{align*}
	\frac{1}{k_n}\mathcal{R}_{1,n}&=\parens{\frac{1}{k_n}\partial_{\alpha}\mathbb{L}_{1,n}\parens{\tilde{\alpha}_n|\hat{\Lambda}_n}}
	\parens{-\frac{9}{8k_n}\partial_{\alpha}^2\mathbb{L}_{1,n}\parens{\hat{\alpha}_n|\hat{\Lambda}_n}}^{-1}
	\parens{\frac{1}{k_n}\partial_{\alpha}\mathbb{L}_{1,n}\parens{\tilde{\alpha}_n|\hat{\Lambda}_n}}^T\\
	&= \parens{\tilde{\alpha}_n-\hat{\alpha}_n}^T\parens{\int_{0}^{1}J^{(2,2)}\parens{\hat{\alpha}_n+u\parens{\tilde{\alpha}_n-\hat{\alpha}_n}|\vartheta^{\star}}\dop u}\\
	&\qquad\quad\times \parens{-\frac{9}{8k_n}\partial_{\alpha}^2\mathbb{L}_{1,n}\parens{\hat{\alpha}_n|\hat{\Lambda}_n}}^{-1}
	\parens{\int_{0}^{1}J^{(2,2)}\parens{\hat{\alpha}_n+u\parens{\tilde{\alpha}_n-\hat{\alpha}_n}|\vartheta^{\star}}\dop u}\parens{\hat{\alpha}_n-\tilde{\alpha}_n}\\
	&\qquad+o_P(1)\\
	&=\parens{\tilde{\alpha}_n-\hat{\alpha}_n}^T\parens{\int_{0}^{1}J^{(2,2)}\parens{\hat{\alpha}_n+u\parens{\tilde{\alpha}_n-\hat{\alpha}_n}|\vartheta^{\star}}\dop u}\\
	&\qquad\quad\times \parens{\frac{9}{8}J^{(2,2)}\parens{\hat{\alpha}_n|\vartheta^{\star}}}^{-1}
	\parens{\int_{0}^{1}J^{(2,2)}\parens{\hat{\alpha}_n+u\parens{\tilde{\alpha}_n-\hat{\alpha}_n}|\vartheta^{\star}}\dop u}\parens{\hat{\alpha}_n-\tilde{\alpha}_n}\\
	&\qquad+o_P(1)
	\end{align*}
	for compactness of $\Theta_1$ again. When we set
	\begin{align*}
	R_{1,n}&=k_n\parens{\tilde{\alpha}_n-\hat{\alpha}_n}^T\parens{\int_{0}^{1}J^{(2,2)}\parens{\hat{\alpha}_n+u\parens{\tilde{\alpha}_n-\hat{\alpha}_n}|\vartheta^{\star}}\dop u}\\
	&\qquad\times \parens{\frac{9}{8}J^{(2,2)}\parens{\hat{\alpha}_n|\vartheta^{\star}}}^{-1}
	\parens{\int_{0}^{1}J^{(2,2)}\parens{\hat{\alpha}_n+u\parens{\tilde{\alpha}_n-\hat{\alpha}_n}|\vartheta^{\star}}\dop u}\parens{\hat{\alpha}_n-\tilde{\alpha}_n},
	\end{align*}
	it holds
	\begin{align*}
	\frac{1}{k_n}R_{1,n}&=\parens{\tilde{\alpha}_n-\hat{\alpha}_n}^T\parens{\int_{0}^{1}J^{(2,2)}\parens{\hat{\alpha}_n+u\parens{\tilde{\alpha}_n-\hat{\alpha}_n}|\vartheta^{\star}}\dop u}\\
	&\qquad\times \parens{\frac{9}{8}J^{(2,2)}\parens{\hat{\alpha}_n|\vartheta^{\star}}}^{-1}
	\parens{\int_{0}^{1}J^{(2,2)}\parens{\hat{\alpha}_n+u\parens{\tilde{\alpha}_n-\hat{\alpha}_n}|\vartheta^{\star}}\dop u}\parens{\hat{\alpha}_n-\tilde{\alpha}_n}\\
	&\ge \frac{8}{9}\lambda_{\max}\parens{J^{(2,2)}}^{-1}\parens{\tilde{\alpha}_n-\hat{\alpha}_n}^T\parens{\int_{0}^{1}J^{(2,2)}\parens{\hat{\alpha}_n+u\parens{\tilde{\alpha}_n-\hat{\alpha}_n}|\vartheta^{\star}}\dop u}^2\parens{\tilde{\alpha}_n-\hat{\alpha}_n}\\
	&\ge \frac{8}{9}\lambda_{\max}\parens{J^{(2,2)}}^{-1}
	\inf_{\alpha\in\Theta_1}\lambda_{\min}\parens{J^{(2,2)}\parens{\alpha|\vartheta^{\star}}}^2
	\norm{\tilde{\alpha}_n-\hat{\alpha}_n}^2\\
	&\ge \frac{8}{9}\lambda_{\max}\parens{J^{(2,2)}}^{-1}
	\inf_{\alpha\in\Theta_1}\lambda_{\min}\parens{J^{(2,2)}\parens{\alpha|\vartheta^{\star}}}^2\abs{\hat{\alpha}_n^{(\ell_1)}}^2\\
	&= \frac{8}{9}\lambda_{\max}\parens{J^{(2,2)}}^{-1}
	\inf_{\alpha\in\Theta_1}\lambda_{\min}\parens{J^{(2,2)}\parens{\alpha|\vartheta^{\star}}}^2
	\abs{\parens{\alpha_{\star}}^{(\ell_1)}}^2 + o_P(1)\\
	&= Q\abs{\parens{\alpha_{\star}}^{(\ell_1)}}^2 + o_P(1),
	\end{align*}
	where $Q:=\frac{8}{9}\lambda_{\max}\parens{J^{(2,2)}}^{-1}
	\inf_{\alpha\in\Theta_1}\lambda_{\min}\parens{J^{(2,2)}\parens{\alpha|\vartheta^{\star}}}^2$.
	Hence we obtain for all $M>0$, $\delta>0$,
	\begin{align*}
	P\parens{\mathcal{R}_{1,n}\le M}
	&=P\parens{\frac{1}{k_n}\mathcal{R}_{1,n}\le \frac{M}{k_n}}\\
	&=P\parens{\tuborg{\frac{1}{k_n}\mathcal{R}_{1,n}\le \frac{M}{k_n}}\cap
		\tuborg{\abs{\frac{1}{k_n}\mathcal{R}_{1,n}-\frac{1}{k_n}R_{1,n}}>\frac{\delta}{3}}}\\
	&\qquad+P\parens{\tuborg{\frac{1}{k_n}\mathcal{R}_{1,n}\le \frac{M}{k_n}}\cap
		\tuborg{\abs{\frac{1}{k_n}\mathcal{R}_{1,n}-\frac{1}{k_n}R_{1,n}}\le \frac{\delta}{3}}}\\
	&\le P\parens{\abs{\frac{1}{k_n}\mathcal{R}_{1,n}-\frac{1}{k_n}R_{1,n}}>\frac{\delta}{3}}+P\parens{\frac{1}{k_n}R_{1,n}\le \frac{M}{k_n}+\frac{\delta}{3}}\\
	&\le P\parens{Q\abs{\hat{\alpha}_n^{(\ell_1)}}^2
		\le \frac{M}{k_n}+\frac{\delta}{3}}+o(1)\\
	&\le P\parens{Q\abs{\parens{\alpha_{\star}}^{(\ell_1)}}^2
		\le \frac{M}{k_n}+\frac{2\delta}{3}}+ P\parens{\abs{\abs{\hat{\alpha}_n^{(\ell_1)}}^2-\abs{\parens{\alpha_{\star}}^{(\ell_1)}}^2}
		\ge \frac{\delta}{3Q}}+o(1)\\
	&\le P\parens{Q\abs{\parens{\alpha_{\star}}^{(\ell_1)}}^2
		\le \frac{M}{k_n}+\frac{2\delta}{3}}+o(1).
	\end{align*}
	We can choose $\delta$ to suffice $\delta< Q\abs{\parens{\alpha_{\star}}^{(\ell_1)}}^2$. For any $M>0$, there exists sufficiently large $n$ such that $M/k_n\le\delta/3$. Hence
	\begin{align*}
		P\parens{\mathcal{R}_{1,n}\le M}
		\le P\parens{Q\abs{\parens{\alpha_{\star}}^{(\ell_1)}}^2
			\le \delta}+o(1)=o(1),
	\end{align*}
	and then we obtain the result.
\end{proof}

\begin{proof}[Proof of Theorem \ref{w0}]
	We also proof for the convergence of $\mathcal{W}_{1,n}$ with $r_1>0$ and use the sets of notation same as Theorem 1. It holds
	\begin{align*}
	\mathcal{W}_{1,n}
	&=\frac{1}{\sqrt{k_n}}\partial_{\alpha}\mathbb{L}_{1,n}\parens{\tilde{\alpha}_n|\hat{\Lambda}_n}
	\parens{\tilde{J}_{ii,n}^{(2,2)}\parens{\tilde{\alpha}_n,\hat{\alpha}_n}}^{-1}
	\parens{-\frac{9}{8k_n}\partial_{\alpha}^2\mathbb{L}_{1,n}\parens{\hat{\alpha}_n|\hat{\Lambda}_n}}\\
	&\qquad\times\parens{\tilde{J}_{ii,n}^{(2,2)}\parens{\tilde{\alpha}_n,\hat{\alpha}_n}}^{-1}
	\frac{1}{\sqrt{k_n}}\parens{\partial_{\alpha}\mathbb{L}_{1,n}}^T\parens{\tilde{\alpha}_n|\hat{\Lambda}_n}\\
	&\cl Z^T\parens{I^{(2,2)}}^{1/2}\parens{I-J^{(2,2)}H}
	\parens{J^{(2,2)}}^{-1}\\
	&\qquad\times\parens{\frac{9}{8}J^{(2,2)}}\parens{J^{(2,2)}}^{-1}\parens{I-J^{(2,2)}H}
	\parens{I^{(2,2)}}^{1/2}Z\\
	&=Z^T\parens{I^{(2,2)}}^{1/2}\parens{I-J^{(2,2)}H}
	\parens{\frac{9}{8}J^{(2,2)}}^{-1}\parens{I-J^{(2,2)}H}
	\parens{I^{(2,2)}}^{1/2}Z\\
	&\sim \chi_{r_1}^{2}.
	\end{align*}
\end{proof}

\begin{proof}[Proof of Theorem \ref{w1}]
	We proof for the case where $r_1>0$ and there exists $\ell_1\in\tuborg{1,\cdots,r_1}$ such that $\parens{\alpha^{\star}}^{(\ell_1)}\neq 0$. For compactness of $\Theta_1$ and consistency of $\hat{\alpha}_n$
	\begin{align*}
	\frac{1}{k_n}\mathcal{W}_{1,n}
	&=\parens{\hat{\alpha}_n-\tilde{\alpha}_n}^T
	\parens{-\frac{9}{8k_n}\partial_{\alpha}^2\mathbb{L}_{1,n}\parens{\hat{\alpha}_n|\hat{\Lambda}_n}}
	\parens{\hat{\alpha}_n-\tilde{\alpha}_n}\\
	&=\parens{\alpha^{\star}-\tilde{\alpha}_n}^T
	\parens{I^{(2,2)}}
	\parens{\alpha^{\star}-\tilde{\alpha}_n} + o_P(1).
	\end{align*}
	and
	\begin{align*}
	A_n
	&:=\parens{\alpha^{\star}-\tilde{\alpha}_n}^T
	\parens{I^{(2,2)}}
	\parens{\alpha^{\star}-\tilde{\alpha}_n}\\
	&\ge  \abs{\parens{\alpha^{\star}}^{(\ell_1)}}^2 \inf_{\norm{\mathbf{x}}=1}\parens{\mathbf{x}^T\parens{I^{(2,2)}}\mathbf{x}}.
	\end{align*}
	Note the assumption (A6) and (A7); then
	\begin{align*}
	\mathbb{Y}_1(\alpha)=\norm{\alpha-\alpha^{\star}}^{-2}\parens{\alpha-\alpha^{\star}}^T
	J^{(2,2)}
	\parens{\alpha-\alpha^{\star}} + o\parens{1},
	\end{align*}
	when we consider $\alpha\to\alpha^{\star}$. Therefore, $I^{(2,2)}$ is positive definite, and for all $M>0$ and $\delta>0$,
	\begin{align*}
	P\parens{\mathcal{W}_{1,n}\le M}
	&=P\parens{\frac{1}{k_n}\mathcal{W}_{1,n}\le \frac{M}{k_n}}\\
	&=P\parens{\frac{1}{k_n}\mathcal{W}_{1,n} - A_n
		-\frac{M}{k_n} \le  -A_n}\\
	&\le P\parens{\tuborg{\frac{1}{k_n}\mathcal{W}_{1,n} - A_n
		-\frac{M}{k_n} \le  -A_n}\cap\tuborg{A_n> \delta}}
	+P\parens{A_n\le \delta}\\
	&=P\parens{\tuborg{\frac{1}{k_n}\mathcal{W}_{1,n} - A_n
			-\frac{M}{k_n} \le  -A_n}\cap\tuborg{-A_n< -\delta}}
	+P\parens{A_n\le \delta}\\
	&\le P\parens{\frac{1}{k_n}\mathcal{W}_{1,n} - A_n
		-\frac{M}{k_n} \le  -\delta}+
	P\parens{A_n\le \delta}\\
	&\le P\parens{\frac{1}{k_n}\mathcal{W}_{1,n} - A_n
		\le  \frac{M}{k_n} -\delta}+
	P\parens{A_n\le \delta}.
	\end{align*}
	For any $M>0$ and $\delta>0$, we have sufficiently large $n$ such that $M/k_n<\delta/2$, and then
	\begin{align*}
		P\parens{\frac{1}{k_n}\mathcal{W}_{1,n} - A_n
			\le  \frac{M}{k_n} -\delta}&\le P\parens{\frac{1}{k_n}\mathcal{W}_{1,n} - A_n
			\le  -\frac{\delta}{2}}+o(1)\\
		&= P\parens{ A_n-\frac{1}{k_n}\mathcal{W}_{1,n}
			\ge  \frac{\delta}{2}}+o(1)\\
		&\le P\parens{\abs{A_n-\frac{1}{k_n}\mathcal{W}_{1,n}}\ge  \frac{\delta}{2}}+o(1)\\
		&=o(1).
	\end{align*}
	Hence
	\begin{align*}
		P\parens{\mathcal{W}_{1,n}\le M}&\le P\parens{A_n\le \delta} + o(1)\\
		&\le P\parens{\abs{\parens{\alpha^{\star}}^{(\ell_1)}}^2 
			\inf_{\norm{\mathbf{x}}=1}\parens{\mathbf{x}^T\parens{I^{(2,2)}}\mathbf{x}}\le\delta }+o(1)
	\end{align*}
	Since $\delta$ is arbitrary, we obtain the proof.
\end{proof}

\section*{Acknowledgement}
This work 
was partially supported by 
JST CREST,
JSPS KAKENHI Grant Number 
JP17H01100 
and Cooperative Research Program
of the Institute of Statistical Mathematics.

\bibliography{bibliography}

\end{document}